\documentclass[11pt,reqno]{amsart}

\usepackage{amssymb,mathtools}
\usepackage{microtype}
\usepackage{url}
\usepackage{aliascnt}
\usepackage[hidelinks]{hyperref}
\usepackage[capitalize,nameinlink]{cleveref}
\crefname{equation}{equation}{equations}
\Crefname{equation}{Equation}{Equations}
\crefname{section}{Section}{Sections}
\crefname{subsection}{Section}{Sections}

\theoremstyle{plain}
\newtheorem{theorem}{Theorem}[section]
\newaliascnt{proposition}{theorem}
\newtheorem{proposition}[proposition]{Proposition}
\aliascntresetthe{proposition}
\crefname{proposition}{Proposition}{Propositions}
\newaliascnt{lemma}{theorem}
\newtheorem{lemma}[lemma]{Lemma}
\aliascntresetthe{lemma}
\crefname{lemma}{Lemma}{Lemmas}
\newaliascnt{corollary}{theorem}
\newtheorem{corollary}[corollary]{Corollary}
\aliascntresetthe{corollary}
\crefname{corollary}{Corollary}{Corollaries}
\theoremstyle{definition}
\newaliascnt{remark}{theorem}
\newtheorem{remark}[remark]{Remark}
\aliascntresetthe{remark}
\crefname{remark}{Remark}{Remarks}
\numberwithin{equation}{section}

\newcommand{\E}{\mathbb{E}}
\newcommand{\Pp}{\mathbb{P}}
\newcommand{\R}{\mathbb{R}}
\newcommand{\C}{\mathbb{C}}
\newcommand{\1}{\mathbf{1}}
\newcommand{\Tr}{\operatorname{Tr}}
\newcommand{\Var}{\operatorname{Var}}
\newcommand{\ESD}{\operatorname{ESD}}
\newcommand{\op}{\mathrm{op}}
\newcommand{\F}{\mathrm{F}}
\newcommand{\dd}{\,\mathrm{d}}
\newcommand{\eps}{\varepsilon}
\newcommand{\Imc}{\operatorname{Im}}
\newcommand{\supp}{\operatorname{supp}}
\newcommand{\law}{\operatorname{Law}}
\newcommand{\NC}{\mathrm{NC}}

\title[Critical tensor covariance at the MP threshold]{Critical tensor
covariance at the Marchenko--Pastur threshold}
\author{Xiaohui Xie}
\address{University of California,
Irvine, California 92697, USA}
\email{xhx@uci.edu}
\date{September 2026}
\subjclass[2020]{Primary 60B20, 15B52; Secondary 60F05, 05E30}
\keywords{Marchenko--Pastur law; random tensors; sample covariance matrices;
free compound-Poisson law; Johnson scheme}

\begin{document}

\begin{abstract}
  For a centered, variance-one random variable $X$ with finite fourth moment,
  let $x$ be the vector of square-free degree-$d$ monomials in $n$ independent
  copies of $X$. At the critical scale $d^2/n\to\lambda\in(0,\infty)$, the
  normalized squared length of $x$ converges to the lognormal variable
  $R=\exp(\sqrt{\lambda v}\,Z-\lambda v/2)$, where $v=\E X^4-1$ and $Z$ is
  standard normal. If $\binom nd/N\to c\in(0,\infty)$, the sample covariance of
  $N$ independent copies of $x$ has an almost-sure limiting spectral law: the
  free compound-Poisson law with rate $1/c$ and jump distribution $\law(cR)$. It
  reduces to Marchenko--Pastur when $\lambda v=0$. The proof shows that
  subtracting the contribution of the sample length leaves vanishing
  quadratic-form fluctuations, even when $\E X^3\ne0$; length and direction need
  not be independent.
\end{abstract}

\maketitle

\section{Introduction}

Let $X_1,\ldots,X_n$ be independent copies of a real random variable $X$ with
$\E X=0$, $\E X^2=1$ and $\E X^4<\infty$. The principal, or square-free, tensor
of degree $d$ is
\[
  x=(x_I)_{I\in\binom{[n]}d},
  \qquad x_I=\prod_{i\in I}X_i,
  \qquad p=\binom nd.
\]
Its coordinates are uncorrelated with unit variance, but share many of the same
factors. For $N$ independent copies, consider
\[
  S_n=\frac1N\sum_{\alpha=1}^N x^{(\alpha)}x^{(\alpha)\mathsf T},
  \qquad \frac pN\longrightarrow c\in(0,\infty).
\]
For $d=1$, the empirical spectral distribution (ESD) converges to the
Marchenko--Pastur (MP) law~\cite{MarchenkoPastur1967}. How far can $d$ grow
before the shared factors change the limit?

Bryson, Vershynin and Zhao~\cite{BrysonVershyninZhao2021} proved the MP law for
$d=o(n^{1/3})$. Yaskov~\cite[Theorems~2.2 and~2.3]{Yaskov2023} extended this to
$d=o(\sqrt n)$ under uniformly bounded fourth moments; for i.i.d.\ coordinates
with a fixed law satisfying $\Pp(X^2=1)<1$, this range is necessary. The
obstruction at $d\asymp\sqrt n$ is already visible in the quadratic form with
$A=I$. Write
\[
  R_n=\frac{\|x\|^2}{p},
  \qquad v=\E X^4-1.
\]
If $K$ is the overlap of two independent uniform $d$-subsets of $[n]$, then
\[
  \E K=\frac{d^2}{n},
  \qquad \E R_n=1,
  \qquad \E R_n^2=\E(1+v)^K.
\]
At $d^2/n\to\lambda$, the last expectation tends to $e^{\lambda v}$. In fact, we
prove in \cref{sec:radius} that
\[
  R_n\longrightarrow
  R=\exp\!\left(\sqrt{\lambda v}\,Z-\frac{\lambda v}{2}\right)
  \quad\text{in }W_2,
  \qquad Z\sim N(0,1).
\]
When $\lambda v>0$, concentration of $x^{\mathsf T}Ax/p$ around $\Tr A/p$
therefore fails even for $A=I$. The usual quadratic-form route to the MP
law~\cite{PajorPastur2009,Yaskov2016} cannot apply.

Subtracting the radial contribution repairs this failure:
\[
  \sup_{\|A\|_{\op}\le1}
  \E\left|\frac{x^{\mathsf T}Ax}{p}
  -R_n\frac{\Tr A}{p}\right|^2\longrightarrow0.
\]
Since the limiting radius is strictly positive, this also implies $x^{\mathsf
T}Ax/\|x\|^2-\Tr A/p\to0$ in probability, uniformly over bounded deterministic
$A$, with the ratio defined arbitrarily when $x=0$. Thus the direction still
concentrates in the sense needed for covariance spectra; it need not be
independent of the length.

Together with the radius limit, the estimate gives almost-sure ESD convergence
to the law characterized by
\begin{equation}
  \frac1{s(z)}+z=\E\frac{R}{1+cRs(z)}.
  \label{eq:intro-fixed-point}
\end{equation}
This is the free compound-Poisson law with rate $1/c$ and jump law $\law(cR)$.
The theorem includes $\lambda=0$ and $v=0$, when it reduces to the MP law.

\subsection{Quadratic forms off the radial mode}

The diagonal residual is $p^{-1}\sum_I b_I\prod_{i\in I}X_i^2$ with
$\sum_Ib_I=0$. Expanding in centered products gives its variance as
\[
  p^{-2}\sum_{r=1}^d v^r\|M_r^{\mathsf T}b\|_2^2,
\]
where $M_r$ is the inclusion matrix between $r$-subsets and $d$-subsets. Using
the full operator norm loses the zero-sum condition and gives an order-one bound
at criticality. Its maximizing vector is constant: this is precisely the radial
contribution that has been subtracted. On the orthogonal complement of the
constants, the Johnson-graph spectrum gives an additional factor $r/d$, enough
to make the variance vanish.

The off-diagonal terms have a different difficulty. Group $A_{IJ}x_Ix_J$ by
$D=I\triangle J$. If $\E X^3=0$, distinct groups are orthogonal in $L^2$, but
centering alone does not suffice. Already at degree two, the groups $D=\{1,2\}$
and $D'=\{1,3\}$ contain respectively $X_1X_2X_3^2$ and $X_1X_3X_2^2$, whose
covariance is
\[
  \E[X_1^2X_2^3X_3^3]=(\E X^3)^2.
\]
For example, it equals $1/2$ when $X=\sqrt2$ with probability $1/3$ and
$X=-1/\sqrt2$ with probability $2/3$. To separate these coupled terms, expand
$X^2$ in
\[
  1,\qquad X,\qquad \frac{X^2-1-\mu_3X}{\tau},
  \qquad \mu_3=\E X^3,\quad \tau^2=v-\mu_3^2,
\]
omitting the last function when $\tau=0$. These functions are orthonormal.
Independence of the coordinates then makes their products orthogonal; collecting
coefficients in this basis gives an off-diagonal variance bound of order $d/n$.
The argument loses a constant factor compared with the case $\mu_3=0$. We do not
know whether that loss is necessary.

\subsection{The radius and the limiting law}

The other task is to identify the radius. With $\delta_i=X_i^2-1$, its
elementary-symmetric expansion is close in $L^2$ to a product:
\[
  R_n=\sum_{k=0}^d\frac{(d)_k}{(n)_k}e_k(\delta),
  \qquad P_n=\prod_{i=1}^n\left(1+\frac dn\delta_i\right),
  \qquad R_n-P_n\longrightarrow0.
\]
The approximation holds when $d^2/n$ is bounded. The heuristic
\[
  \log R_n\approx
  \frac dn\sum_i\delta_i-\frac{d^2}{2n^2}\sum_i\delta_i^2
\]
then suggests the lognormal law: the first term has limiting variance $\lambda
v$, and the second tends to $-\lambda v/2$. The proof justifies the logarithmic
expansion for $P_n$ using only $\E\delta_i^2<\infty$, then transfers the limit
to $R_n$.

Equation \eqref{eq:intro-fixed-point} is familiar from weighted covariance
matrices~\cite{PajorPastur2009,ElKaroui2009}. Yaskov's random-profile
replacement theorem also permits the weight to depend on the sample
itself~\cite[Theorem~2.2]{Yaskov2014}. Once the two tensor estimates are proved,
a standard leave-one-out argument yields the equation; we record the version we
use in \cref{thm:abstract-radial}.

The moments of the resulting law are
\[
  \sum_{\pi\in\NC(k)}c^{k-|\pi|}
  \prod_{V\in\pi}\exp\!\left(\lambda v\binom{|V|}{2}\right).
\]
For $\lambda v>0$, their growth makes Carleman's sufficient criterion
inapplicable. We therefore identify the law through its Stieltjes transform
before computing its moments.

The same moment sequence occurs, after changing normalization, in Collins,
Yao and Yuan~\cite[Theorem~2.1 and Remark~2.2]{CollinsYaoYuan2022}. Their
columns are full products of $k$ independent $n$-vectors, with dimension
$n^k$ and $k/n\to\vartheta$. The parameter corresponding to $\lambda v$ is
$\vartheta(\E|\xi|^4-1)$. They leave weak-limit identification open. Our
columns use one base vector and square-free monomials, and the proof of
weak convergence requires only a fourth moment. Yuan~\cite{Yuan2024} treats
the unit-modulus full-product model without restricting $k$.

For principal tensors, the unit-modulus case has deterministic radius. Our
off-diagonal estimate recovers Yaskov's MP range
$\min(d,n-d)=o(n)$~\cite{Yaskov2025}.

Recent work also extends the subcritical theory beyond independent base
coordinates. Cheng and Mikulincer~\cite{ChengMikulincer2026} prove the MP law
for exchangeable unconditional bases under suitable moment assumptions. For
fixed degree, their Theorems~2.3 and~4.2 also treat independent external
weights and an anisotropic version of \eqref{eq:intro-fixed-point}.
Diaconu~\cite{Diaconu2026} treats principal tensors with independent symmetric
sub-Gaussian base coordinates in the range $d=o(\sqrt n)$. Here the base
coordinates remain independent, but the degree reaches the critical scale
and the random weight is the tensor's own squared length.

Free compound-Poisson laws also arise in moving-average and dependent covariance
models~\cite{HasegawaSakumaYoshida2013,Boedihardjo2015}.

The single-vector estimates occupy \cref{sec:radius,sec:johnson,sec:quadratic};
the spectral argument and properties of the limit follow.

\section{Model and main theorem}
\label{sec:model}

For integers $1\le d\le n$, write
\[
  \Omega_{n,d}=\binom{[n]}d,
  \qquad p=p_{n,d}=|\Omega_{n,d}|=\binom nd.
\]
Let $X$ be a real random variable satisfying
\begin{equation}
  \E X=0,
  \qquad \E X^2=1,
  \qquad B:=\E X^4<\infty,
  \label{eq:base-assumptions}
\end{equation}
and define
\[
  v=B-1=\Var(X^2)\ge0.
\]
For every $n$, let $X_1,\ldots,X_n$ be i.i.d. copies of $X$, and define the
principal tensor feature vector $x\in\R^p$ by
\[
  x_I=\prod_{i\in I}X_i,
  \qquad I\in\Omega_{n,d}.
\]
The conditions in \eqref{eq:base-assumptions} imply
\[
  \E x_Ix_J=\1_{\{I=J\}},
\]
so $x$ is isotropic.

Let $x^{(1)},\ldots,x^{(N)}$ be independent copies of $x$ and set
\[
  S_n=\frac1N\sum_{\alpha=1}^{N}
  x^{(\alpha)}x^{(\alpha)\mathsf T}.
\]
We consider the critical asymptotic regime
\begin{equation}
  \frac{d^2}{n}\to\lambda\in[0,\infty),
  \qquad \frac{p}{N}\to c\in(0,\infty).
  \label{eq:critical-scaling}
\end{equation}
All limits in the paper are as $n\to\infty$. In particular $d/n\to0$, $d<n/2$
eventually, and $p=\binom nd\ge n\to\infty$ eventually.

Throughout, $N$ is the number of samples, $R$ (with or without indices) denotes
a radius, and $s$ denotes a Stieltjes transform: for a probability measure $\mu$
on $\R$,
\[
  s_\mu(z)=\int_{\R}\frac1{t-z}\,\mu(\dd t),
  \qquad z\in\C_+.
\]
This convention maps $\C_+$ into $\C_+$.

Define
\[
  R=\exp\!\left(
  \sqrt{\lambda v}\,Z-\frac{\lambda v}{2}
  \right),
  \qquad Z\sim N(0,1),
\]
and let $\nu_{\lambda,v}=\law(R)$.

\begin{theorem}
  \label{thm:main}
  Assume \eqref{eq:base-assumptions} and \eqref{eq:critical-scaling}. Then the
  empirical spectral distribution of $S_n$ converges weakly almost surely to a
  deterministic probability measure $\mu_{c,\lambda,v}$ on $[0,\infty)$. Its
  Stieltjes transform $s(z)$ is characterized as the unique $s(z)\in\C_+$
  satisfying
  \begin{equation}
    1+z s(z)
    =s(z)\int_{0}^{\infty}
    \frac{r}{1+cr s(z)}\,\nu_{\lambda,v}(\dd r),
    \label{eq:main-fixed-point1}
  \end{equation}
  or equivalently
  \begin{equation}
    \frac1{s(z)}+z
    =\E\left[\frac{R}{1+cRs(z)}\right].
    \label{eq:main-fixed-point2}
  \end{equation}
\end{theorem}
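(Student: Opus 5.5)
The argument has three parts: two single-vector estimates (the radius limit and the radially corrected quadratic-form bound), followed by a standard leave-one-out Stieltjes-transform argument in which the random radius $R_n$ is carried along as a weight. I take as given the two results announced in the introduction. First, $R_n=\|x\|^2/p\to R$ in $W_2$ (\cref{sec:radius}). Second, the corrected quadratic-form estimate
\[
  \sup_{\|A\|_{\op}\le1}\E\Bigl|\tfrac{x^{\mathsf T}Ax}{p}-R_n\tfrac{\Tr A}{p}\Bigr|^2\to0
\]
(\cref{sec:johnson,sec:quadratic}). The diagonal part of this estimate rests on the Johnson-scheme factor $r/d$ on the orthogonal complement of the constants. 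The off-diagonal part uses the orthonormal basis $1,X,(X^2-1-\mu_3X)/\tau$.

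The first step is almost-sure concentration. Write $s_n(z)=p^{-1}\Tr(S_n-z)^{-1}$. A rank-one perturbation changes $s_n$ by at most $1/(p\,\Imc z)$, so McDiarmid's inequality (martingale differences over the $N$ samples) gives $|s_n-\E s_n|\to0$ almost surely, with fourth-moment bounds of order $N^{-2}$ and Borel--Cantelli. It therefore suffices to identify the limit of $\E s_n$ on a countable dense set of $z$.

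The second step is the leave-one-out identity. Let $S^{(\alpha)}=S_n-N^{-1}x^{(\alpha)}x^{(\alpha)\mathsf T}$ and $G^{(\alpha)}=(S^{(\alpha)}-z)^{-1}$. Starting from $I+zG=S_nG$ and applying Sherman--Morrison,
\[
  1+z s_n=\frac1{p}\sum_\alpha\frac{N^{-1}x^{\alpha\mathsf T}G^{(\alpha)}x^{\alpha}}{1+N^{-1}x^{\alpha\mathsf T}G^{(\alpha)}x^{\alpha}}.
\]
Conditionally on $S^{(\alpha)}$, apply the corrected quadratic-form estimate to the real and imaginary parts of $G^{(\alpha)}$, whose operator norms are at most $1/\Imc z$. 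This gives $N^{-1}x^{\alpha\mathsf T}G^{(\alpha)}x^{\alpha}=(p/N)\bigl(R_n^{(\alpha)}\Tr G^{(\alpha)}/p+o_{L^2}(1)\bigr)$, where $R_n^{(\alpha)}$ is the radius of the $\alpha$-th sample. In addition, $p^{-1}\Tr G^{(\alpha)}=s_n+O(1/(p\Imc z))$ and $s_n=\E s_n+o(1)$.

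To replace the quadratic form, I use that $w\mapsto w/(1+w)$ is Lipschitz on the relevant region. Both $cR_ns$ and the quadratic form lie in the closed upper half-plane, since $\Imc(z\,x^{\mathsf T}Gx)\ge0$. There one has $|1+w|\ge\min(1,\cdot)$-type lower bounds, namely $|1+w|^{-1}\le|z|/\Imc z$ in the standard way. The replacement is thus controlled in $L^1$ by the $L^2$ error. Taking expectations and using exchangeability of the samples gives
\[
  1+z\,\E s_n=\E\Bigl[\frac{c R_n\,\E s_n}{1+cR_n\,\E s_n}\Bigr]\cdot\frac1c+o(1),
\]
where a bounded-function argument keeps all terms integrable. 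Since $r\mapsto r s/(1+crs)$ is bounded and continuous for $s\in\C_+$, the convergence $R_n\to R$ in law passes to the limit. Along any subsequence with $\E s_n\to s$ (a normal family), $s$ satisfies \eqref{eq:main-fixed-point1}. Tightness follows from $\E\,p^{-1}\Tr S_n=\E R_n=1$, so $s$ is the Stieltjes transform of a probability measure and $s\in\C_+$.

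The last step is uniqueness of the solution of \eqref{eq:main-fixed-point1} in $\C_+$, which is where I expect the main care to be needed, since $R$ is unbounded and its moments grow fast. I would follow the Silverstein--Bai argument. Suppose $s_1,s_2\in\C_+$ both solve \eqref{eq:main-fixed-point2}. Subtracting the two equations gives
\[
  (s_1-s_2)\Bigl(1-c\,s_1s_2\,\E\frac{R^2}{(1+cRs_1)(1+cRs_2)}\Bigr)=0.
\]
Taking imaginary parts of each equation and applying Cauchy--Schwarz shows that the bracket is nonzero. The key inequality is $|c\,s_j^2\,\E R^2/|1+cRs_j|^2|<1$, which follows from $\Imc s_j>0$ together with $\Imc z>0$. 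Here $\E R^2<\infty$ is convenient but not needed, because $R/|1+cRs|$ is bounded. This uniqueness also shows that the limit is independent of the subsequence. Finally, $\mu$ is supported on $[0,\infty)$ because each $S_n$ is positive semidefinite, and \eqref{eq:main-fixed-point2} follows from \eqref{eq:main-fixed-point1} by dividing by $s$.
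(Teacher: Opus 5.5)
Your proposal is correct and follows the paper's route. It uses the same Sherman--Morrison leave-one-out identity, applies the radial quadratic-form estimate conditionally on the leave-one-out resolvent with the $|z|/\eta$ denominator bound and a rank-one trace comparison, and proves uniqueness by the same imaginary-part and Cauchy--Schwarz argument. The only difference is how randomness is handled. You concentrate $s_n(z)$ around $\E s_n(z)$ over the samples and pass to the limit in the averaged equation through the law of $R_n$. The paper instead keeps $s_n$ random, uses the empirical law of the radii and a subsequence argument to get convergence in probability, and upgrades to almost sure convergence by McDiarmid's inequality applied to the L\'evy distance.
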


The following theorem isolates the covariance argument from the tensor
estimates. It is a random-profile replacement result of the type proved by
Yaskov~\cite{Yaskov2014}, stated here with the scalar profile $R_nI$ and the
uniform $L^2$ hypothesis used below. Its proof is given in
\cref{sec:uniqueness}.

\begin{theorem}[Sample covariance with a random radius]
  \label{thm:abstract-radial}
  Let $\mathbb F\in\{\R,\C\}$ and $p_n\to\infty$. For each $n$, let
  $x_n^{(1)},\ldots,x_n^{(N_n)}$ be independent copies of a random vector
  $x_n\in\mathbb F^{p_n}$, set
  \[
    R_n=\frac{\|x_n\|^2}{p_n},
    \qquad
    W_n=\frac1{N_n}\sum_{\alpha=1}^{N_n}
    x_n^{(\alpha)}{x_n^{(\alpha)}}^*,
  \]
  where $*$ denotes transpose or conjugate transpose according to the field, and
  suppose $p_n/N_n\to c\in(0,\infty)$. Assume that $R_n\Rightarrow\nu$, that
  $\{R_n\}$ is uniformly integrable, and that for some $\eps_n\to0$,
  \begin{equation}
    \sup_{\substack{A\in\C^{p_n\times p_n}\\\|A\|_\op\le1}}
    \E\left|p_n^{-1}x_n^*Ax_n
    -R_np_n^{-1}\Tr A\right|^2\le\eps_n.
    \label{eq:abstract-radial}
  \end{equation}
  Then there is a unique probability measure $\mu_{c,\nu}$ on $[0,\infty)$ whose
  Stieltjes transform satisfies
  \begin{equation}
    \frac1{s(z)}+z=\int_0^\infty\frac{r}{1+crs(z)}\,\nu(\dd r)
    \qquad\text{for all }z\in\C_+,
    \label{eq:abstract-fixed-point}
  \end{equation}
  and $\ESD(W_n)\Rightarrow\mu_{c,\nu}$ weakly in probability. If $\sum_n
  e^{-ap_n}<\infty$ for every $a>0$, the convergence is almost sure.
\end{theorem}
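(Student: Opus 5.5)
We prove \cref{thm:abstract-radial} by the standard leave-one-out resolvent argument, then a uniqueness argument for the fixed point, then concentration. Fix $z\in\C_+$ and write $G(z)=(W_n-z)^{-1}$, $s_n(z)=p_n^{-1}\Tr G(z)$. The first step is \emph{concentration}. Changing one column $x^{(\alpha)}$ is a rank-one perturbation, so it changes $\Tr G$ by at most $\pi/\Imc z$ (or $2/\Imc z$). McDiarmid's inequality then gives
\[
  \Pp(|s_n-\E s_n|>t)\le 4\exp(-c_z p_n^2t^2/N_n).
\]
Since $N_n\asymp p_n$, this is $\exp(-a p_n)$ for some $a=a(t,z)>0$. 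We therefore get convergence in probability always, and almost sure convergence when $\sum e^{-ap_n}<\infty$ for all $a$, by Borel--Cantelli on a countable dense set of $z$ together with equicontinuity of Stieltjes transforms. It remains to show $\E s_n(z)\to s(z)$ and that \eqref{eq:abstract-fixed-point} has a unique solution that is a Stieltjes transform of a measure on $[0,\infty)$.

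The second step is the \emph{approximate fixed point}. Start from $z\,\Tr G=\Tr(W_nG)-p_n$. Apply Sherman--Morrison to $G_\alpha$, the resolvent with column $\alpha$ removed:
\[
  \frac1N x^{(\alpha)*}Gx^{(\alpha)}=1-\frac1{1+N^{-1}x^{(\alpha)*}G_\alpha x^{(\alpha)}}.
\]
Dividing by $p_n$ and taking expectations gives
\[
  1+z\E s_n=\frac{N}{p_n}\E\Bigl[1-\frac1{1+q_\alpha}\Bigr],
  \qquad q_\alpha=N^{-1}x^{(\alpha)*}G_\alpha x^{(\alpha)}.
\]
Now $x^{(\alpha)}$ is independent of $G_\alpha$ and $\|G_\alpha\|_\op\le1/\Imc z$. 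Conditioning on $G_\alpha$ and applying \eqref{eq:abstract-radial} to $A=\Imc z\,G_\alpha$ gives
\[
  q_\alpha=\frac{p_n}{N}\bigl(R_n^{(\alpha)}s_{n,\alpha}+o_{L^2}(1)\bigr).
\]
A rank-one interlacing bound gives $|s_{n,\alpha}-s_n|\le C/(p_n\Imc z)$, and the first step gives $s_n-\E s_n\to0$. The map $q\mapsto 1-1/(1+q)$ is Lipschitz on the region where $q$ and its perturbations stay in the closure of $\C_+$; this uses $\Imc(zq)\ge0$ and $|1+q|^{-1}\le|z|/\Imc z$. The $o_{L^2}$ errors, together with uniform integrability of $R_n$ (to truncate $R_n$), therefore yield
\[
  1+z\E s_n=c^{-1}\E\Bigl[\frac{cR_n\E s_n}{1+cR_n\E s_n}\Bigr]+o(1)
  =\E s_n\,\E\Bigl[\frac{R_n}{1+cR_n\E s_n}\Bigr]+o(1).
\]
Along a subsequence, $\E s_n\to s$ locally uniformly, where $s$ is the Stieltjes transform of a sub-probability measure. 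Since $R_n\Rightarrow\nu$ and the integrand is bounded and continuous once $s\ne0$, the limit $s$ satisfies the first form of the equation, which rearranges to \eqref{eq:abstract-fixed-point}. Tightness comes from $\E\int t\,\dd\ESD(W_n)=\E R_n\le C$, so the limit is a probability measure supported on $[0,\infty)$. We must also rule out $s\equiv0$. For this, note that $zs(z)\to-1$ as $z=iy\to i\infty$ uniformly in $n$, using the first-moment bound.

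The third step, \emph{uniqueness}, is where I expect the main obstacle, since $\nu$ may have unbounded support and only a first moment. Suppose $s_1,s_2\in\C_+$ both solve the equation at a given $z$. Subtracting the two equations gives
\[
  (s_2-s_1)\Bigl[\frac1{s_1s_2}-\int\frac{c r^2}{(1+crs_1)(1+crs_2)}\nu(\dd r)\Bigr]=0.
\]
To show the bracket is nonzero, take imaginary parts of $1/s_j+z=\int r/(1+crs_j)$. This gives
\[
  \frac{\Imc s_j}{|s_j|^2}=\Imc z+\Imc s_j\int\frac{cr^2}{|1+crs_j|^2}\,\nu(\dd r),
\]
so that
\[
  \int\frac{cr^2}{|1+crs_j|^2}\,\nu(\dd r)<\frac1{|s_j|^2}.
\]
By Cauchy--Schwarz, the integral in the bracket has modulus strictly less than $1/|s_1s_2|$, so the bracket is nonzero. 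Hence $s_1=s_2$ pointwise, which forces every subsequential limit to agree and identifies $\mu_{c,\nu}$ through Stieltjes inversion.

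The delicate bookkeeping lies in two places: truncating $R_n$ via uniform integrability inside the Lipschitz estimate, and checking that $|1+cR_n s|$ stays bounded below uniformly, which follows from $\Imc s>0$ on compact subsets of $\C_+$.
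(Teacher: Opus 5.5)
Your proof is correct. Its analytic core is the same as the paper's: Sherman--Morrison with the leave-one-out resolvent, the hypothesis applied conditionally to $\Imc z\,G_\alpha$, and the lower bound $|1+q|\ge \Imc z/|z|$ coming from $\Imc(zq)\ge0$ (this is \cref{lem:denominator}). The rank-one bound on $s_{n,\alpha}-s_n$ and the Cauchy--Schwarz uniqueness argument are also the paper's; the latter is exactly \cref{lem:uniqueness}.

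What differs is where the randomness is handled. The paper works pathwise. It derives the approximate equation for the random $s_n(z)$ with the empirical average $N_n^{-1}\sum_\alpha R_{n,\alpha}/(1+c_nR_{n,\alpha}s_n)$. That forces it to prove a law of large numbers for the empirical radius measure and for $N_n^{-1}\sum_\alpha R_{n,\alpha}$, which is where uniform integrability enters (\eqref{eq:abstract-radius-mean}). It then identifies the limit along almost surely convergent subsequences and uses McDiarmid only at the end, on the L\'evy distance, in \cref{lem:as-upgrade}.

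You put concentration first and then average over a single exchangeable column. Only $\law(R_n)$ and the deterministic $\E s_n$ enter, no empirical radius measure is needed, and both the in-probability and the almost-sure conclusions come from the same exponential bound. Your route is also slightly more economical. Replace $s_{n,\alpha}$ directly by $\E s_n$, using that $R_n^{(\alpha)}$ is independent of $s_{n,\alpha}$. The error is then $\E R_n\cdot\E|s_{n,\alpha}-\E s_n|$, so no truncation is required and $\sup_n\E R_n<\infty$ suffices in place of uniform integrability. In exchange, the paper's route gets convergence in probability without any concentration inequality.

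Two cosmetic points. Replacing a column is a rank-two change, not rank-one; this only doubles the bounded-difference constant. And since the expected ESDs are tight, the subsequential limit is already a probability measure, so you can drop the discussion of sub-probability limits and of $s\equiv0$.
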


\begin{remark}
  \label{rem:moment-assumptions}
  (i) When $R_n\equiv1$, condition \eqref{eq:abstract-radial} is the good-vector
  condition of Pajor and Pastur~\cite{PajorPastur2009} and a uniform $L^2$ form
  of the quadratic-form condition in Yaskov~\cite{Yaskov2016}. For real vectors,
  Yaskov~\cite[Theorem~2.2]{Yaskov2014} already permits a random positive
  semidefinite profile $\Sigma_n$ coupled to $x_n$. Taking $\Sigma_n=R_nI$ gives
  its quadratic-form hypothesis from \eqref{eq:abstract-radial}, while
  $p_n^{-2}\Tr\Sigma_n^2=R_n^2/p_n\to0$ in probability by tightness of $R_n$. We
  use complex test matrices to apply the estimate conditionally to resolvents.

  (ii) If $v=0$, then $X^2=1$ almost surely, $R\equiv1$, and
  \eqref{eq:main-fixed-point2} is the MP equation. The same is true at the
  included endpoint $\lambda=0$.

  (iii) No condition is imposed on $\E X^3$; \cref{rem:third-moment} describes
  what a nonzero third moment changes in the proof. The limit depends on the law
  of $X$ only through $v=\E X^4-1$; in particular two base laws with equal
  fourth moments and different third moments give the same limit. For $X\sim
  N(0,1)$, $v=2$ and $R=\exp(\sqrt{2\lambda}\,Z-\lambda)$.
\end{remark}

\section{The radius}
\label{sec:radius}

Define the normalized squared radius
\[
  R_n=\frac{\|x\|^2}{p}
  =\frac1{\binom nd}\sum_{I\in\Omega_{n,d}}
  \prod_{i\in I}X_i^2.
\]
Its expansion in centered products gives an $L^2$ comparison with
$\prod_i(1+\tfrac dn(X_i^2-1))$, whose logarithm is a sum of independent terms.
Put $Y_i=X_i^2$ and $\delta_i=Y_i-1$, so that $\E\delta_i=0$ and
$\E\delta_i^2=v$. Let $e_k(\delta)$ denote the $k$th elementary symmetric
polynomial in $\delta_1,\ldots,\delta_n$, with $e_0=1$.

\begin{lemma}
  \label{lem:elementary-expansion}
  For $a_n=d/n$,
  \begin{equation}
    R_n=\sum_{k=0}^{d}
    \frac{(d)_k}{(n)_k}e_k(\delta),
    \label{eq:Rn-expansion}
  \end{equation}
  where $(x)_k=x(x-1)\cdots(x-k+1)$. If
  \[
    P_n=\prod_{i=1}^n(1+a_n\delta_i)
    =\sum_{k=0}^n a_n^k e_k(\delta),
  \]
  then
  \begin{equation}
    \E|R_n-P_n|^2\longrightarrow0.
    \label{eq:Rn-Pn-L2}
  \end{equation}
  Moreover,
  \begin{equation}
    \sup_n\E R_n^2<\infty.
    \label{eq:Rn-second-bound}
  \end{equation}
\end{lemma}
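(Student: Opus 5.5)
First we establish the expansion \eqref{eq:Rn-expansion}. Write $Y_i=1+\delta_i$ and expand
\[
  \prod_{i\in I}Y_i=\sum_{K\subseteq I}\prod_{i\in K}\delta_i .
\]
Summing over $I\in\Omega_{n,d}$, each $k$-set $K$ is contained in exactly $\binom{n-k}{d-k}$ sets $I$. Since
\[
  \binom{n-k}{d-k}\Big/\binom nd=\frac{(d)_k}{(n)_k},
\]
we obtain $R_n=\sum_k\frac{(d)_k}{(n)_k}e_k(\delta)$, as claimed.

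Next we compute second moments. The products $\delta_K=\prod_{i\in K}\delta_i$ are orthogonal in $L^2$, because the $\delta_i$ are independent and centered, and $\E\delta_K^2=v^{|K|}$. Hence $\E e_k(\delta)e_l(\delta)=\1_{\{k=l\}}\binom nk v^k$. This gives
\[
  \E|R_n-P_n|^2=\sum_{k=0}^n\binom nk v^k\bigl(a_n^k-\tfrac{(d)_k}{(n)_k}\bigr)^2,
\]
with the convention $(d)_k=0$ for $k>d$. Similarly, $\E R_n^2=\sum_k\binom nk v^k\bigl(\frac{(d)_k}{(n)_k}\bigr)^2$. Using $\frac{(d)_k}{(n)_k}\le a_n^k$, because $\frac{d-j}{n-j}\le\frac dn$, we get
\[
  \E R_n^2\le\E P_n^2=(1+a_n^2v)^n\le e^{d^2v/n},
\]
which is bounded because $d^2/n\to\lambda$. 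This proves \eqref{eq:Rn-second-bound}.

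For \eqref{eq:Rn-Pn-L2}, set $\rho_k=\frac{(d)_k}{(n)_k}a_n^{-k}=\prod_{j<k}\frac{1-j/d}{1-j/n}\in[0,1]$. Then
\[
  \E|R_n-P_n|^2=\sum_k\binom nk v^k a_n^{2k}(1-\rho_k)^2 .
\]
The weights satisfy $\binom nk v^ka_n^{2k}\le\frac{(na_n^2v)^k}{k!}=\frac{(d^2v/n)^k}{k!}$, which are dominated by the summable sequence $C^k/k!$ uniformly in $n$. For each fixed $k$, $\rho_k\to1$ since $d\to\infty$ and $n\to\infty$. Note that $d\to\infty$ holds when $\lambda>0$. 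When $\lambda=0$ with $d$ bounded, we do not use this convergence. Instead we bound the whole sum directly: every $k\ge1$ term is at most $(d^2v/n)^k/k!\to0$, and the $k=0$ term vanishes since $\rho_0=1$. For general $\lambda=0$ the same bound $\sum_{k\ge1}(d^2v/n)^k/k!\to0$ suffices.

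When $\lambda>0$, dominated convergence in $k$ gives $\sum_k\frac{(d^2v/n)^k}{k!}(1-\rho_k)^2\to0$. For $k>d$ we have $\rho_k=0$. Those terms are bounded by the tail $\sum_{k>d}C^k/k!$, which tends to $0$ because $d\to\infty$.

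The only mildly delicate point is this uniform domination together with the pointwise convergence $\rho_k\to1$, and that is routine. I expect no real obstacle here.
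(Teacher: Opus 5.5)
Your proposal is correct and follows the paper's proof essentially step for step: you use the same inclusion count for \eqref{eq:Rn-expansion}, the same orthogonal expansion of $\E|R_n-P_n|^2$ with domination by $(vd^2/n)^k/k!$, the direct bound $e^{vd^2/n}-1$ when $\lambda=0$, and dominated convergence when $\lambda>0$. The one variation is that you obtain \eqref{eq:Rn-second-bound} directly from the termwise comparison $\E R_n^2\le\E P_n^2=(1+a_n^2v)^n$, whereas the paper deduces it from \eqref{eq:Rn-Pn-L2} together with $\E P_n^2\to e^{\lambda v}$; your separate remark about the $k>d$ tail is harmless but already covered by the dominated convergence argument.
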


\begin{proof}
  Because
  \[
    \prod_{i\in I}(1+\delta_i)
    =\sum_{T\subseteq I}\prod_{j\in T}\delta_j,
  \]
  summing over all $I\in\Omega_{n,d}$ yields
  \[
    e_d(1+\delta_1,\ldots,1+\delta_n)
    =\sum_{k=0}^d\binom{n-k}{d-k}e_k(\delta).
  \]
  After division by $\binom nd$,
  \[
    \frac{\binom{n-k}{d-k}}{\binom nd}
    =\frac{(d)_k}{(n)_k},
  \]
  which proves \eqref{eq:Rn-expansion}.

  Products indexed by distinct subsets are orthogonal: a coordinate in their
  symmetric difference appears once and has mean zero. Hence $e_k(\delta)$ and
  $e_\ell(\delta)$ are orthogonal for $k\ne\ell$, and
  \[
    \E e_k(\delta)^2=\binom nk v^k.
  \]
  Define
  \[
    \pi_{n,k}=\begin{cases}
      (d)_k/(n)_k, & k\le d, \\
      0,           & k>d.
    \end{cases}
  \]
  Then orthogonality gives
  \begin{equation}
    \E|R_n-P_n|^2
    =\sum_{k=0}^{n}
    (\pi_{n,k}-a_n^k)^2\binom nk v^k.
    \label{eq:orthogonal-sum}
  \end{equation}
  For $k\le d$,
  \[
    \frac{d-j}{n-j}\le\frac dn=a_n,
    \qquad 0\le j\le k-1,
  \]
  so $0\le\pi_{n,k}\le a_n^k$ for every $k$. Thus
  \begin{align}
    (\pi_{n,k}-a_n^k)^2\binom nk v^k
     & \le a_n^{2k}\binom nk v^k\nonumber \\
     & \le \frac1{k!}
    \left(\frac{d^2}{n}v\right)^k.
    \label{eq:domination}
  \end{align}
  The right side is summable uniformly in $n$ because $d^2/n\to\lambda$. If
  $\lambda=0$, the $k=0$ summand in \eqref{eq:orthogonal-sum} vanishes and
  \eqref{eq:domination} gives directly
  \[
    \E|R_n-P_n|^2\le
    \exp\!\left(v\frac{d^2}{n}\right)-1\longrightarrow0.
  \]
  If $\lambda>0$, then $d\to\infty$ and, for each fixed $k$, eventually $k\le d$
  and
  \[
    \frac{\pi_{n,k}}{a_n^k}
    =\prod_{j=0}^{k-1}
    \frac{1-j/d}{1-j/n}\longrightarrow1.
  \]
  Dominated convergence in \eqref{eq:orthogonal-sum} proves \eqref{eq:Rn-Pn-L2}
  in this case as well.

  Finally,
  \[
    \E P_n^2
    =\prod_{i=1}^n\E(1+a_n\delta_i)^2
    =(1+a_n^2v)^n
    \longrightarrow e^{\lambda v}.
  \]
  Together with \eqref{eq:Rn-Pn-L2}, this implies \eqref{eq:Rn-second-bound}.
\end{proof}

\begin{lemma}
  \label{lem:radius-lognormal}
  Under \eqref{eq:critical-scaling},
  \begin{equation}
    \log P_n\Rightarrow
    N\!\left(-\frac{\lambda v}{2},\lambda v\right),
    \label{eq:logPn-limit}
  \end{equation}
  and
  \begin{equation}
    R_n\Rightarrow
    R=\exp\!\left(\sqrt{\lambda v}\,Z-\frac{\lambda v}{2}\right).
    \label{eq:Rn-limit}
  \end{equation}
  In fact $R_n/P_n\to1$ in probability. Moreover,
  \begin{equation}
    \law(R_n)\longrightarrow\law(R)
    \quad\text{in }W_2,
    \qquad
    \E R_n^2\longrightarrow e^{\lambda v}.
    \label{eq:radius-W2}
  \end{equation}
\end{lemma}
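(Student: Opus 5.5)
We first establish \eqref{eq:logPn-limit} and then transfer the limit to $R_n$ using \cref{lem:elementary-expansion}. A subtlety is that $1+a_n\delta_i=1-a_n+a_nX_i^2\ge 1-a_n>0$ once $d<n$. Hence $P_n>0$ and $\log P_n=\sum_i\log(1+a_n\delta_i)$ is a sum of i.i.d.\ terms in a triangular array.

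\textbf{Step 1: the CLT for $\log P_n$.} Write $\log(1+u)=u-\tfrac12u^2+\rho(u)$ for $u\ge -1/2$, with $|\rho(u)|\le C|u|^3$ for $|u|\le 1/2$ and $|\rho(u)|\le C u^2$ for $u\ge -1/2$. Then
\[
\log P_n=a_n\sum_i\delta_i-\frac{a_n^2}{2}\sum_i\delta_i^2+\sum_i\rho(a_n\delta_i).
\]
The first term has mean zero and variance $na_n^2v=(d^2/n)v\to\lambda v$. Since $a_n\to0$, the Lindeberg condition holds: $n a_n^2\E[\delta^2\1_{\{|\delta|>\eps/a_n\}}]\to0$ using only $\E\delta^2<\infty$. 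Hence this term converges in law to $N(0,\lambda v)$.

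For the second term, $a_n^2\sum\delta_i^2=(d^2/n)\cdot\frac1n\sum\delta_i^2\to\lambda v$ in probability. This follows from the weak law of large numbers for i.i.d.\ variables with finite mean, applied row-wise: a truncation argument in the triangular array, or simply convergence in $L^1$ via truncation.

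For the remainder, split on the event $|a_n\delta_i|\le\eta$. On this event, $|\rho(a_n\delta_i)|\le C\eta\,a_n^2\delta_i^2$, and the sum of these contributions is $O_P(\eta)$. On the complement, the contribution is bounded in expectation by $C n a_n^2\E[\delta^2\1_{\{|\delta|>\eta/a_n\}}]\to0$. Letting $\eta\to0$ gives $\sum\rho\to0$ in probability. Slutsky's theorem then yields \eqref{eq:logPn-limit}, and hence $P_n\Rightarrow R$.

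\textbf{Step 2: transfer to $R_n$.} By \eqref{eq:Rn-Pn-L2}, $R_n-P_n\to0$ in $L^2$. Since $\log P_n$ is tight, $P_n$ is bounded away from $0$ in probability. Therefore $R_n/P_n=1+(R_n-P_n)/P_n\to1$ in probability, and \eqref{eq:Rn-limit} follows.

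\textbf{Step 3: $W_2$ convergence and the second moment.} Weak convergence together with convergence of second moments is equivalent to $W_2$ convergence, so it suffices to show $\E R_n^2\to\E R^2=e^{\lambda v}$. From the proof of \cref{lem:elementary-expansion}, $\E P_n^2=(1+a_n^2v)^n\to e^{\lambda v}$. Since $\|R_n-P_n\|_2\to0$, we get $\E R_n^2\to e^{\lambda v}$. Finally, $\E R^2=\E e^{2\sqrt{\lambda v}Z-\lambda v}=e^{\lambda v}$.

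\textbf{Main obstacle.} The step requiring care is Step 1 under only a second moment on $\delta$: the cubic Taylor bound is unusable on large $\delta_i$. The truncation at level $\eta/a_n$ described above, with the tail controlled by $na_n^2\E[\delta^2;\,|\delta|>\eta/a_n]\to0$, is the intended fix. The bound $\rho(u)=O(u^2)$ for $u\ge-1/2$ handles the large positive values.
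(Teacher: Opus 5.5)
Your proof is correct and follows the paper's argument. You Taylor-expand $\log P_n$, apply the CLT and LLN while controlling the remainder through $na_n^2\E[\delta^2\1_{\{|\delta|>\eps/a_n\}}]\to0$, transfer to $R_n$ via \eqref{eq:Rn-Pn-L2} and tightness of $1/P_n$, and deduce $W_2$ convergence from convergence of second moments. The differences are minor: you truncate coordinatewise rather than proving $\max_i|a_n\delta_i|\to0$, and you get $\E R_n^2\to e^{\lambda v}$ directly from $\|R_n-P_n\|_2\to0$ and $\E P_n^2=(1+a_n^2v)^n\to e^{\lambda v}$. That is shorter than the paper's exact hypergeometric identity $\E R_n^2=\E(1+v)^K$, although the paper's identity also gives a formula valid at every degree, which it reuses in its final section.
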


\begin{proof}
  Because $Y_i\ge0$ and $a_n<1$ for all sufficiently large $n$,
  \[
    1+a_n\delta_i=1-a_n+a_nY_i\ge1-a_n>0,
  \]
  so $\log P_n$ is well-defined.

  We claim
  \begin{equation}
    \max_{1\le i\le n}|a_n\delta_i|\xrightarrow{p}0.
    \label{eq:max-small}
  \end{equation}
  Since $\E\delta^2<\infty$,
  \[
    t^2\Pp(|\delta|>t)
    \le \E\bigl[\delta^2\1_{\{|\delta|>t\}}\bigr]\to0.
  \]
  Thus, for fixed $\eps>0$,
  \begin{align*}
    \Pp\!\left(\max_i a_n|\delta_i|>\eps\right)
     & \le n\Pp\!\left(|\delta|>\frac{\eps}{a_n}\right) \\
     & =\frac{na_n^2}{\eps^2}
    \left(\frac{\eps}{a_n}\right)^2
    \Pp\!\left(|\delta|>\frac{\eps}{a_n}\right)
    \longrightarrow0,
  \end{align*}
  because $na_n^2=d^2/n\to\lambda$.

  On the event $\max_i|a_n\delta_i|\le1/2$, Taylor's theorem gives
  \[
    \left|\log(1+u)-u+\frac{u^2}{2}\right|
    \le C|u|^3.
  \]
  Consequently,
  \begin{align}
     & \left|
    \log P_n-a_n\sum_{i=1}^n\delta_i
    +\frac{a_n^2}{2}\sum_{i=1}^n\delta_i^2
    \right|\nonumber   \\
     & \hspace{2cm}\le
    C\left(\max_i|a_n\delta_i|\right)
    a_n^2\sum_{i=1}^n\delta_i^2.
    \label{eq:log-taylor}
  \end{align}
  By the law of large numbers,
  \[
    a_n^2\sum_{i=1}^n\delta_i^2
    =\frac{d^2}{n}\left(\frac1n\sum_{i=1}^n\delta_i^2\right)
    \xrightarrow{p}\lambda v.
  \]
  Together with \eqref{eq:max-small}, the right side of \eqref{eq:log-taylor} is
  $o_p(1)$. The classical central limit theorem gives
  \[
    a_n\sum_{i=1}^n\delta_i
    =\frac d{\sqrt n}
    \left(\frac1{\sqrt n}\sum_{i=1}^n\delta_i\right)
    \Rightarrow N(0,\lambda v),
  \]
  and hence \eqref{eq:logPn-limit}.

  By \eqref{eq:logPn-limit}, $P_n\Rightarrow R>0$, so $1/P_n$ is tight. Together
  with \eqref{eq:Rn-Pn-L2}, this gives $R_n/P_n\to1$ in probability and
  \eqref{eq:Rn-limit}.

  For the second moment, let $I,J$ be independent uniform $d$-subsets and
  $K=|I\cap J|$. Since $(1+v)^K=\sum_{r\ge0}(K)_rv^r/r!$ and the hypergeometric
  overlap has factorial moments $\E(K)_r=(d)_r^2/(n)_r$, we have the exact
  identity
  \[
    \E R_n^2=\E(1+v)^K
    =\sum_{r=0}^d\frac{v^r}{r!}\frac{(d)_r^2}{(n)_r}.
  \]
  If $\lambda>0$, every fixed summand converges to $(\lambda v)^r/r!$, and
  \[
    \frac{(d)_r^2}{(n)_r}
    \le\left(\frac{d^2}{n-d}\right)^r
  \]
  gives a summable uniform majorant. If $\lambda=0$, Maclaurin's inequality
  gives
  \[
    1\le \E R_n^2=\E B^K
    \le\left(1+v\frac dn\right)^d
    \le e^{vd^2/n}\longrightarrow1.
  \]
  Thus in all cases $\E R_n^2\to e^{\lambda v}=\E R^2$. Weak convergence plus
  convergence of second moments is equivalent to convergence in $W_2$, proving
  \eqref{eq:radius-W2}. In particular, when $\lambda>0$, $R_n>0$ with
  probability tending to one and $\log R_n-\log P_n\to0$ in probability on that
  event.
\end{proof}

\begin{remark}
  The $W_2$ conclusion is optimal under a fourth-moment assumption. For every
  $q>2$ there is a symmetric standardized $X$ with $\E X^4<\infty$ but
  $\E|X|^{2q}=\infty$. For fixed $I_0\in\Omega_{n,d}$,
  \[
    R_n\ge p^{-1}\prod_{i\in I_0}X_i^2,
  \]
  and therefore $\E R_n^q=\infty$. No universal $W_q$ conclusion with $q>2$ is
  possible under \eqref{eq:base-assumptions}.
\end{remark}

\section{An estimate on the Johnson scheme}
\label{sec:johnson}

For a diagonal quadratic form, subtracting the radial contribution replaces
the coefficients $A_{II}$ by $b_I=A_{II}-p^{-1}\Tr A$, whose sum is zero.
The second-moment kernel of $Y_I=\prod_{i\in I}X_i^2$ is
$\E Y_IY_J=B^{|I\cap J|}$: it depends only on the overlap size. This is the
Johnson-scheme structure behind the diagonal estimate. Expanding $Y_I$ in
centered products leads to inclusion matrices, and we need their norm only
on the orthogonal complement of the constants. The bound below follows from
the classical Johnson-graph spectrum~\cite{BrouwerCohenNeumaier1989} by
factoring these matrices into down-operators.

For $0\le r\le d$, let $M_r$ be the $\binom nd\times\binom nr$ incidence matrix
\[
  (M_r)_{I,T}=\1_{\{T\subseteq I\}},
  \qquad |I|=d,\ |T|=r.
\]

\begin{lemma}
  \label{lem:johnson-spectrum}
  Assume $1\le \ell<n/2$. Let $A_\ell$ be the adjacency matrix of the Johnson
  graph $J(n,\ell)$: two $\ell$-subsets are adjacent when their symmetric
  difference has size two. For a $j$-subset $T$, define
  \[
    f_T(I)=\1_{\{T\subseteq I\}},
    \qquad |I|=\ell,
  \]
  and let $U_j$ be the span of the functions $f_T$ with $|T|=j$. Then
  \[
    U_0\subseteq U_1\subseteq\cdots\subseteq U_\ell.
  \]
  Set $U_{-1}=\{0\}$. If $V_j=U_j\cap U_{j-1}^{\perp}$, then $A_\ell$ acts on
  $V_j$ by the scalar
  \begin{equation}
    \theta_j=(\ell-j)(n-\ell-j)-j,
    \qquad 0\le j\le \ell.
    \label{eq:johnson-eigs}
  \end{equation}
  In particular, when $\ell<n/2$, the largest eigenvalue of $A_\ell$ on the
  orthogonal complement of the constant vector is
  \begin{equation}
    \theta_1=\ell(n-\ell)-n.
    \label{eq:johnson-second}
  \end{equation}
\end{lemma}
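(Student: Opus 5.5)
Write $D_j\colon\R^{\binom{[n]}{j}}\to\R^{\binom{[n]}{j-1}}$ for the down-operator and $D_j^{\mathsf T}$ for the up-operator between consecutive layers of the Boolean lattice. The plan is to use these raising and lowering operators together with the standard commutation relation, and then read the eigenvalue off the identity $A_\ell=D^{\mathsf T}D-\ell I$ on layer $\ell$.

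\textbf{Step 1: the nesting $U_{j-1}\subseteq U_j$.} For a $(j-1)$-set $S$ and $|I|=\ell$, count the $j$-sets $T$ with $S\subseteq T\subseteq I$. This gives $\sum_{T\supseteq S,\,|T|=j}f_T=(\ell-j+1)f_S$, and the coefficient is nonzero for $j\le\ell$. Hence $f_S\in U_j$, so the chain $U_0\subseteq U_1\subseteq\cdots\subseteq U_\ell$ holds.

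\textbf{Step 2: a convenient description of $V_j$.} Identify $U_j$ with the image of the composite up-map $M^{(\ell,j)}\colon \R^{\binom{[n]}j}\to\R^{\binom{[n]}\ell}$ defined by $g\mapsto\sum_T g(T)f_T$. The orthogonal complement of $U_{j-1}$ inside $U_j$ is then the image under $M^{(\ell,j)}$ of $\ker D_j$, the harmonic vectors on layer $j$. To check this, use three facts. First, the adjoint of $M^{(\ell,j-1)}$ composed with $M^{(\ell,j)}$ factors through $D_j$, up to a positive scalar times lower-order terms. Second, the classical decomposition $\R^{\binom{[n]}j}=\ker D_j\oplus D_j^{\mathsf T}\R^{\binom{[n]}{j-1}}$ holds for $j\le n/2$. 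Third, $M^{(\ell,j)}$ is injective when $j\le\ell\le n-j$, which follows from $\ell<n/2$. Justifying all this carefully via the up–down commutation is the technical heart.

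\textbf{Step 3: the commutation relation and the eigenvalue.} On layer $m$ we have
\[
  D_{m+1}D_{m+1}^{\mathsf T}-D_m^{\mathsf T}D_m=(n-2m)I .
\]
Iterating it, for harmonic $g$ on layer $j$ and $h=(D^{\mathsf T})^{\ell-j}g$ on layer $\ell$, we get $D^{\mathsf T}Dh=\kappa h$. The scalar is
\[
  \kappa=\sum_{m=j}^{\ell-1}(n-2m)=(\ell-j)(n-\ell-j+1).
\]
Since two distinct $\ell$-sets are adjacent exactly when they share a common $(\ell-1)$-subset, we have $D^{\mathsf T}D=\ell I+A_\ell$ on layer $\ell$. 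Therefore
\[
  \theta_j=(\ell-j)(n-\ell-j+1)-\ell=(\ell-j)(n-\ell-j)-j,
\]
which is \eqref{eq:johnson-eigs}.

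\textbf{Step 4: the second eigenvalue.} By Steps 1–2 the spaces $V_0,\ldots,V_\ell$ are mutually orthogonal and exhaust $U_\ell=\R^{\binom{[n]}\ell}$, since $f_T$ for $|T|=\ell$ are the point masses. The space $V_0=U_0$ consists of the constants. It remains to show $\theta_j$ is decreasing in $j$. A direct computation gives $\theta_{j}-\theta_{j+1}=n-2j\ge n-2\ell+2>0$ for $j<\ell<n/2$. Hence the maximum over $j\ge1$ is $\theta_1=(\ell-1)(n-\ell-1)-1=\ell(n-\ell)-n$, which is \eqref{eq:johnson-second}.

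The main obstacle is Step 2. It requires identifying $U_j\cap U_{j-1}^\perp$ exactly with the image of the harmonic space, and establishing the injectivity needed to conclude that these $V_j$ are nonzero and exhaust the space. If that gets messy, the fallback is to cite the classical Johnson-scheme eigenspace description from \cite{BrouwerCohenNeumaier1989}.
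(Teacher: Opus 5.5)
Your argument is correct, but it takes a different route from the paper. The paper never identifies $V_j$. It computes directly that $A_\ell f_T=\theta_jf_T+(\ell-j+1)\sum_{S\subset T,\,|S|=j-1}f_S$. So $A_\ell$ is triangular with respect to the filtration $U_0\subseteq\cdots\subseteq U_\ell$, with scalar $\theta_j$ on level $j$. Since $A_\ell$ is symmetric and preserves every $U_j$, it preserves $V_j$, and for $g\in V_j$ the vector $A_\ell g-\theta_jg$ lies in $V_j\cap U_{j-1}=\{0\}$. Exhaustion is then automatic from the nesting, and attainment of $\theta_1$ is checked with the explicit vector $\1_{\{1\in I\}}-\1_{\{2\in I\}}\in V_1$.

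Your up--down argument buys more structure: $V_j$ is exactly the lift of $\ker D_j$, and with injectivity $\dim V_j=\binom nj-\binom n{j-1}$. It also fits the identity $\partial_\ell^{\mathsf T}\partial_\ell=\ell I+A_\ell$ that the paper uses in the next lemma. The cost is the Step 2 bookkeeping, which the triangularity argument avoids entirely.

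That step is also lighter than you fear:
\begin{itemize}
\item The induction in your Step 3 shows that $D$ sends each intermediate lift $(D^{\mathsf T})^{m-j}g$ to a multiple of $(D^{\mathsf T})^{m-1-j}g$. Hence $D^{\ell-j+1}(D^{\mathsf T})^{\ell-j}g$ is a multiple of $D_jg=0$. Since $f_S$ is a multiple of $(D^{\mathsf T})^{\ell-j+1}\delta_S$, this is precisely the orthogonality of the lift to $U_{j-1}$. No vague ``lower-order terms'' are needed.
\item The splitting $\ker D_j\oplus\operatorname{im}D_j^{\mathsf T}$ is just $\ker D_j\oplus(\ker D_j)^{\perp}$, so it holds for every $j$, not only $j\le n/2$.
\item Your Step 1 count gives $M^{(\ell,j)}D_j^{\mathsf T}=(\ell-j+1)M^{(\ell,j-1)}$. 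Hence $U_j=M^{(\ell,j)}(\ker D_j)+U_{j-1}$, and so $V_j=M^{(\ell,j)}(\ker D_j)$ with no injectivity at all.
\item Injectivity matters only for $V_1\neq\{0\}$, which you need for $\theta_1$ to be attained. It follows from your own relation via $\|D_{m+1}^{\mathsf T}g\|^2=\|D_mg\|^2+(n-2m)\|g\|^2$, or you can use the explicit vector above.
\end{itemize}
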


\begin{proof}
  For a $(j-1)$-subset $S$,
  \[
    f_S
    =\frac1{\ell-j+1}
    \sum_{\substack{T\supset S\\|T|=j}}f_T,
  \]
  which proves the nesting.

  The constant space $V_0=U_0$ is handled separately:
  $A_\ell\1=\ell(n-\ell)\1=\theta_0\1$. Now fix $1\le j\le \ell$ and a
  $j$-subset $T$. If $T\subset I$, then among the $\ell(n-\ell)$ neighbors of
  $I$, exactly $(\ell-j)(n-\ell)$ still contain $T$. If $|T\cap I|=j-1$, then
  exactly $\ell-j+1$ neighbors contain $T$. Otherwise no neighbor contains $T$.
  Therefore
  \[
    A_\ell f_T
    =(\ell-j)(n-\ell)f_T
    +(\ell-j+1)\1_{\{|T\cap I|=j-1\}}.
  \]
  But
  \[
    \1_{\{|T\cap I|=j-1\}}
    =\sum_{\substack{S\subset T\\|S|=j-1}}f_S(I)-j f_T(I),
  \]
  so
  \begin{equation}
    A_\ell f_T
    =\theta_jf_T
    +(\ell-j+1)
    \sum_{\substack{S\subset T\\|S|=j-1}}f_S,
    \label{eq:Aq-action}
  \end{equation}
  with $\theta_j$ as in \eqref{eq:johnson-eigs}. Since $A_\ell$ is symmetric and
  preserves every $U_j$, it preserves $V_j$, and the lower-level term in
  \eqref{eq:Aq-action} is orthogonal to $V_j$. Thus $A_\ell$ acts by $\theta_j$
  on $V_j$.

  The orthogonal decomposition generated by the nested spaces exhausts
  $U_\ell=\R^{\binom{[n]}\ell}$. Finally,
  \[
    \ell(n-\ell)-\theta_j=j(n-j+1).
  \]
  For $j\ge1$ and $j\le \ell<n/2$,
  \[
    j(n-j+1)-n=(j-1)(n-j)\ge0,
  \]
  with equality at $j=1$. Hence \eqref{eq:johnson-second} is the largest
  nonconstant eigenvalue. It is attained: the nonzero zero-sum function
  $I\mapsto\1_{\{1\in I\}}-\1_{\{2\in I\}}$ belongs to $V_1$.
\end{proof}

\begin{lemma}
  \label{lem:incidence-bound}
  Assume $n>2d$. Let $b=(b_I)_{I\in\Omega_{n,d}}\in\C^{\Omega_{n,d}}$ satisfy
  \[
    \sum_{I\in\Omega_{n,d}}b_I=0.
  \]
  Then for $1\le r\le d$,
  \begin{equation}
    \|M_r^{\mathsf T}b\|_2^2
    \le
    \binom{d-1}{r-1}
    \binom{n-r-1}{d-r}
    \|b\|_2^2.
    \label{eq:incidence-bound}
  \end{equation}
\end{lemma}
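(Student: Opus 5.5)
The plan is to diagonalize the Gram matrix $G_r=M_rM_r^{\mathsf T}$ on the decomposition $\C^{\Omega_{n,d}}=V_0\oplus V_1\oplus\cdots\oplus V_d$ from \cref{lem:johnson-spectrum} (with $\ell=d<n/2$, complexified). Then $\|M_r^{\mathsf T}b\|_2^2=\langle G_rb,b\rangle$. The hypothesis $\sum_Ib_I=0$ says exactly that $b\perp V_0=U_0$. So it suffices to prove two things: $G_r$ acts on each $V_j$ by a scalar $\gamma_j$, and $\max_{j\ge1}\gamma_j=\gamma_1=\binom{d-1}{r-1}\binom{n-r-1}{d-r}$.

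\textbf{Step 1: $G_r$ preserves each $U_j$ and is triangular.} The entries are $(G_r)_{IJ}=\binom{|I\cap J|}{r}$. Fix a $j$-subset $T$. Count pairs $(S,J)$ with $|S|=r$, $S\subseteq I\cap J$ and $T\subseteq J$:
\[
  (G_rf_T)(I)=\sum_{\substack{S\subseteq I\\|S|=r}}\binom{n-|S\cup T|}{d-|S\cup T|}.
\]
Grouping by $Q=S\cap T$, the summand depends only on $|Q|$. The number of admissible $S$ for a given $Q$ is expressible through indicators $f_Q$ and lower-order $f_{Q'}$, $Q'\subseteq Q$. So $G_rf_T\in U_j$, and
\[
  G_rf_T=\gamma_jf_T+(\text{combination of }f_{Q'},\ |Q'|<j).
\]
Since $G_r$ is symmetric and preserves every $U_j$, it preserves $V_j=U_j\cap U_{j-1}^\perp$ and acts there by the leading coefficient $\gamma_j$, exactly as in the proof of \cref{lem:johnson-spectrum}.

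\textbf{Step 2: identify $\gamma_j$.} Only the terms with $T\subseteq S$ (so $Q=T$) contribute to the coefficient of $f_T$. After inclusion–exclusion I expect
\[
  \gamma_j=\binom{d-j}{r-j}\binom{n-r-j}{d-r}\quad(j\le r),\qquad \gamma_j=0\quad(j>r).
\]
The vanishing for $j>r$ follows from $\operatorname{rank}G_r\le\dim U_r$. The extraction of the leading coefficient is the step I expect to be the main obstacle, since the inclusion–exclusion bookkeeping is easy to get wrong.

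As a cleaner alternative, following the paper's hint, I would factor $M_r^{\mathsf T}=\frac1{(d-r)!}D_{r+1}\cdots D_d$ into down-operators $D_k$ from $k$-sets to $(k-1)$-sets. I would then use $D_kD_k^{\mathsf T}=A_{k-1}+(n-k+1)I$ on level $k-1$ together with the relation $D_k^{\mathsf T}D_k=A_k+kI$. Combined with \cref{lem:johnson-spectrum}, these give the singular values of $D_k$ on $V_j$ as $\sqrt{(k-j)(n-k-j+1)}$, and multiplying them out gives the same $\gamma_j$. A sanity check is $j=0$: $\gamma_0=\binom dr\binom{n-r}{d-r}$ is the row sum of $G_r$.

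\textbf{Step 3: monotonicity.} For $1\le j<r$,
\[
  \frac{\gamma_{j+1}}{\gamma_j}=\frac{r-j}{d-j}\cdot\frac{n-d-j}{n-r-j}\le1,
\]
since $r\le d$. Hence $\gamma_j\le\gamma_1$ for all $j\ge1$. Expanding $b=\sum_{j\ge1}b_j$ with $b_j\in V_j$ then gives
\[
  \langle G_rb,b\rangle=\sum_j\gamma_j\|b_j\|^2\le\gamma_1\|b\|_2^2,
\]
which is \eqref{eq:incidence-bound}.
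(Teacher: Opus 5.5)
Your argument is correct once Step 2 is carried out through the down-operator factorization, but it is organized differently from the paper's proof. The paper never diagonalizes $M_rM_r^{\mathsf T}$. It uses only two facts about each $\partial_\ell$: it preserves the zero-sum subspace, and, by $\partial_\ell^{\mathsf T}\partial_\ell=\ell I+A_\ell$ and the top nonconstant eigenvalue $\theta_1$, it satisfies $\|\partial_\ell f\|_2^2\le(\ell-1)(n-\ell)\|f\|_2^2$ there. Multiplying these norm bounds along $\partial_{r+1}\cdots\partial_d=(d-r)!\,M_r^{\mathsf T}$ gives the constant directly. That route never follows the individual $V_j$ from level to level and never compares the $\gamma_j$.

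Your route gives more: the whole spectrum $\gamma_j=\binom{d-j}{r-j}\binom{n-r-j}{d-r}$, and hence sharpness of the bound. However, it relies on a fact you leave implicit. Multiplying the per-level singular values $\sqrt{(k-j)(n-k-j+1)}$ is legitimate only because $D_k$ maps $V_j^{(k)}$ (at level $k$) into $V_j^{(k-1)}$. This does follow from your two relations. Together they give $D_k(A_k+kI)=(A_{k-1}+(n-k+1)I)D_k$. So $D_k$ carries the $\theta_j^{(k)}$-eigenspace into the eigenspace for $\theta_j^{(k)}+2k-n-1=\theta_j^{(k-1)}$, and these eigenvalues are distinct for $k-1<n/2$. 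You should state this explicitly.

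The inclusion--exclusion sketch, on the other hand, is wrong as written. If you group by the exact intersection $Q=S\cap T$, the number of admissible $S$ is $f_Q(I)\binom{d-|I\cap T|}{r-|Q|}$. This involves the higher indicators $f_{Q''}$ with $Q\subseteq Q''\subseteq T$, not lower ones. So groups with $Q\ne T$ do contribute to the coefficient of $f_T$. For $j=1$, the group $Q=\varnothing$ contributes $-\binom{d-1}{r-1}\binom{n-r-1}{d-r-1}$. Keeping only $Q=T$ would give $\binom{d-1}{r-1}\binom{n-r}{d-r}$, which is too large as soon as $r<d$.

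The repair is to M\"obius-invert the weight rather than the count. Let $\phi(q)=\binom{n-r-j+q}{n-d}$, the number of $J\supseteq S\cup T$ when $|S\cap T|=q$, and write $\phi(|S\cap T|)=\sum_{Q\subseteq S\cap T}\psi(|Q|)$. Then $G_rf_T=\sum_{Q\subseteq T}\psi(|Q|)\binom{d-|Q|}{r-|Q|}f_Q$, and now only $Q=T$ contributes to $f_T$. Here $\psi(j)$ is the $j$th forward difference of $\phi$ at $0$, which equals $\binom{n-r-j}{n-d-j}=\binom{n-r-j}{d-r}$.

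Finally, the vanishing of $\gamma_j$ for $j>r$ is best justified by noting that the column space of $G_r$ lies in that of $M_r$, which is $U_r$. A bare rank count does not tell you which $V_j$ are annihilated.
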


\begin{proof}
  The real self-adjoint estimates below extend to complex vectors by
  complexification. For $1\le \ell\le d$, define the down-operator from level
  $\ell$ to level $\ell-1$ by
  \[
    (\partial_\ell f)(S)=\sum_{\substack{I\supset S\\|I|=\ell}}f(I),
    \qquad |S|=\ell-1.
  \]
  A direct count shows
  \begin{equation}
    \partial_\ell^{\mathsf T}\partial_\ell=\ell I+A_\ell.
    \label{eq:down-johnson}
  \end{equation}
  If $f$ has zero sum, then \cref{lem:johnson-spectrum} and
  \eqref{eq:down-johnson} give
  \begin{align}
    \|\partial_\ell f\|_2^2
     & \le (\ell+\theta_1)\|f\|_2^2\nonumber \\
     & =(\ell-1)(n-\ell)\|f\|_2^2.
    \label{eq:down-bound}
  \end{align}
  The zero-sum subspace is preserved, since
  \[
    \sum_{|S|=\ell-1}(\partial_\ell f)(S)
    =\ell\sum_{|I|=\ell}f(I),
  \]
  and
  \[
    \partial_{r+1}\cdots\partial_d b
    =(d-r)!\,M_r^{\mathsf T}b,
  \]
  because every chain from an $r$-subset $T$ to a $d$-subset $I\supset T$ is
  counted $(d-r)!$ times. Applying \eqref{eq:down-bound} successively yields
  \[
    \|M_r^{\mathsf T}b\|_2^2
    \le
    \frac{\prod_{\ell=r+1}^{d}(\ell-1)(n-\ell)}{[(d-r)!]^2}
    \|b\|_2^2.
  \]
  The coefficient equals
  \[
    \binom{d-1}{r-1}
    \binom{n-r-1}{d-r},
  \]
  which proves \eqref{eq:incidence-bound}.
\end{proof}

\section{Radial concentration of quadratic forms}
\label{sec:quadratic}

The diagonal estimate follows from \cref{lem:incidence-bound}. For the
off-diagonal part, grouping by $I\triangle J$ is not enough: in a product of
terms from distinct groups, a coordinate may occur to the third power.
The example in the introduction shows why centering does not make these
groups orthogonal. We instead decompose $X^2$ into its constant part, its
component along $X$, and an orthogonal remainder. Products of these functions
give an orthogonal expansion even when the symmetric-difference groups are
coupled.

Put $\mu_3=\E X^3$, $h_1(X)=X$, and
\[
  \tau^2=v-\mu_3^2\ge0.
\]
The inequality follows from $\mu_3=\operatorname{Cov}(X,X^2)$ and
Cauchy--Schwarz. If $\tau>0$, set
\[
  h_2(X)=\frac{X^2-1-\mu_3X}{\tau},
  \qquad X^2=1+\mu_3h_1(X)+\tau h_2(X).
\]
Then $1,h_1,h_2$ are orthonormal. When $\tau=0$, omit $h_2$ and its term in the
expansion. Independence makes
\[
  \Phi_{S_1,S_2}
  =\prod_{i\in S_1}h_1(X_i)\prod_{i\in S_2}h_2(X_i),
  \qquad S_1\cap S_2=\varnothing,
\]
an orthonormal family; when $\tau=0$, all sums below have $S_2=\varnothing$.

Fix a complex symmetric matrix $A$ with zero diagonal. In an ordered pair $I\ne
J$, write $D=I\triangle J$ and $C=I\cap J$. For $|D|=2q$, $|C|=d-q$, and $C\cap
D=\varnothing$, collect the entries with this difference and overlap:
\[
  \omega_{D,C}
  =\sum_{\substack{U\subset D\\|U|=q}}
  A_{C\cup U,\,C\cup(D\setminus U)}.
\]
Here $1\le q\le d$. The coefficients of the product expansion involve sums over
overlaps containing a prescribed set $W$:
\begin{equation}
  \gamma_D(W)
  =\sum_{\substack{|C|=d-q,\ C\cap D=\varnothing\\C\supseteq W}}
  \omega_{D,C},
  \qquad W\cap D=\varnothing.
  \label{eq:gamma-DW}
\end{equation}
We will also use the Cauchy--Schwarz bound
\begin{equation}
  |\omega_{D,C}|^2
  \le\binom{2q}{q}
  \sum_{\substack{U\subset D\\|U|=q}}
  \left|A_{C\cup U,\,C\cup(D\setminus U)}\right|^2.
  \label{eq:alpha-CS}
\end{equation}

\begin{lemma}[Off-diagonal expansion]
  \label{lem:offdiag-chaos}
  With the notation above,
  \[
    x^{\mathsf T}Ax
    =\sum_{S_1\cap S_2=\varnothing}\zeta_{S_1,S_2}\Phi_{S_1,S_2},
  \]
  where
  \begin{equation}
    \zeta_{S_1,S_2}
    =\sum_{\substack{D\subseteq S_1,\ 2\le |D|\le2d\\|D|\ {\rm even}}}
    \mu_3^{|S_1\setminus D|}\tau^{|S_2|}
    \gamma_D\bigl((S_1\setminus D)\cup S_2\bigr).
    \label{eq:chaos-coefficients}
  \end{equation}
\end{lemma}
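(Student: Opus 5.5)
The plan is a direct expansion. Since $A$ has zero diagonal,
\[
  x^{\mathsf T}Ax=\sum_{I\ne J}A_{IJ}x_Ix_J .
\]
For an ordered pair $I\ne J$, put $D=I\triangle J$ and $C=I\cap J$. Then
\[
  x_Ix_J=\prod_{i\in D}X_i\prod_{j\in C}X_j^2 .
\]
The first step is to regroup the sum by the pair $(D,C)$. The pairs $(I,J)$ with given $(D,C)$, $|D|=2q$, are exactly $I=C\cup U$, $J=C\cup(D\setminus U)$ with $U\subset D$ and $|U|=q$. Hence
\[
  x^{\mathsf T}Ax=\sum_{D,C}\omega_{D,C}\prod_{i\in D}h_1(X_i)\prod_{j\in C}X_j^2,
\]
where $D$ ranges over even sets with $2\le|D|\le2d$ and $C$ over $(d-q)$-sets disjoint from $D$.

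Next, substitute $X_j^2=1+\mu_3h_1(X_j)+\tau h_2(X_j)$ for every $j\in C$ and expand the product over $C$. Each term corresponds to a choice of disjoint $W_1,W_2\subseteq C$: the coordinates in $W_1$ receive $\mu_3h_1$, those in $W_2$ receive $\tau h_2$, and the rest receive $1$. This gives
\[
  \prod_{j\in C}X_j^2=\sum_{W_1\sqcup W_2\subseteq C}\mu_3^{|W_1|}\tau^{|W_2|}\prod_{j\in W_1}h_1(X_j)\prod_{j\in W_2}h_2(X_j).
\]
Since $C\cap D=\varnothing$, the combined monomial is exactly $\Phi_{S_1,S_2}$ with $S_1=D\sqcup W_1$ and $S_2=W_2$, and these are disjoint. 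When $\tau=0$, only $W_2=\varnothing$ survives, consistent with the convention.

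Then exchange the order of summation and collect the coefficient of a fixed $\Phi_{S_1,S_2}$. A contributing term is determined by $(D,C,W_1,W_2)$ subject to four conditions:
\begin{itemize}
  \item $D\subseteq S_1$ with $|D|$ even and $2\le|D|\le2d$;
  \item $W_1=S_1\setminus D$ and $W_2=S_2$;
  \item $C\supseteq W_1\cup W_2=(S_1\setminus D)\cup S_2$;
  \item $C\cap D=\varnothing$ and $|C|=d-q$.
\end{itemize}
The constraint $W_1\cap W_2=\varnothing$ holds automatically, because $S_1\cap S_2=\varnothing$. For fixed $D$, summing $\omega_{D,C}$ over the admissible $C$ is by definition $\gamma_D\bigl((S_1\setminus D)\cup S_2\bigr)$. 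The weight is $\mu_3^{|S_1\setminus D|}\tau^{|S_2|}$. This is exactly \eqref{eq:chaos-coefficients}.

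The only point requiring care is the bijection bookkeeping. We must check that the map
\[
  (D,C,W_1,W_2)\mapsto(S_1,S_2,D,C)
\]
is injective and that no term is counted twice. This holds because $D$ is recovered as an explicit summation index, and $W_1,W_2$ are recovered from $S_1,S_2,D$. If $(S_1\setminus D)\cup S_2$ has more than $d-q$ elements, or meets $D$, then $\gamma_D$ is an empty sum. Such cases contribute zero, consistently with the formula.
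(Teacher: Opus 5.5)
Your proposal is correct and is essentially the paper's proof. You group the ordered pairs by $(D,C)$ to produce $\omega_{D,C}$, substitute $X_j^2=1+\mu_3h_1(X_j)+\tau h_2(X_j)$ on $C$, expand, and collect the coefficient of $\Phi_{S_1,S_2}$ with $S_1=D\cup W_1$ and $S_2=W_2$ (also note that your caveat about $(S_1\setminus D)\cup S_2$ meeting $D$ is vacuous, since $D\subseteq S_1$ and $S_1\cap S_2=\varnothing$ force disjointness).
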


\begin{proof}
  Writing $Q=x^{\mathsf T}Ax$ and grouping ordered pairs by $D$ and $C$ gives
  \begin{equation}
    Q=\sum_{q=1}^d\sum_{|D|=2q}
    \left(\prod_{i\in D}h_1(X_i)\right)
    \sum_{\substack{|C|=d-q\\C\cap D=\varnothing}}
    \omega_{D,C}
    \prod_{i\in C}\bigl(1+\mu_3h_1(X_i)+\tau h_2(X_i)\bigr).
    \label{eq:Q-decomp}
  \end{equation}
  Expanding the last product selects disjoint sets $T_1,T_2\subseteq C$ with
  weight $\mu_3^{|T_1|}\tau^{|T_2|}$ and produces $\Phi_{D\cup T_1,T_2}$.
  Collecting terms with $S_1=D\cup T_1$ and $S_2=T_2$ gives
  \eqref{eq:chaos-coefficients}.
\end{proof}

\begin{theorem}[Radial quadratic-form concentration]
  \label{thm:radial-qf}
  Under \eqref{eq:base-assumptions} and \eqref{eq:critical-scaling}, there
  exists a deterministic sequence $\eps_n\to0$ such that for every deterministic
  complex $p\times p$ matrix $A$,
  \begin{equation}
    \E\left|
    \frac{x^{\mathsf T}Ax}{p}
    -R_n\frac{\Tr A}{p}
    \right|^2
    \le \eps_n\|A\|_{\op}^2.
    \label{eq:radial-qf}
  \end{equation}
  For all sufficiently large $n$, one may take
  \[
    \eps_n=\bigl(\sqrt{\eps^{\mathrm{diag}}_n}
    +\sqrt{\eps^{\mathrm{off}}_n}\bigr)^2,
  \]
  where
  \begin{equation}
    \eps^{\mathrm{diag}}_n=\frac{vd}{n-d}e^{vd^2/(n-d)},
    \qquad
    \eps^{\mathrm{off}}_n=
    \frac{8d(n-d)}{n(n-1)}
    \exp\!\left(\frac{2vd^2}{n-d}\right).
    \label{eq:delta-kappa}
  \end{equation}
\end{theorem}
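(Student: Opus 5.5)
We split $A=\operatorname{diag}(A)+A^{\mathrm{off}}$ and prove the two variance bounds separately; Minkowski's inequality in $L^2$ then gives $\eps_n=(\sqrt{\eps^{\mathrm{diag}}_n}+\sqrt{\eps^{\mathrm{off}}_n})^2$. Since $\Tr A=\Tr\operatorname{diag}(A)$, the radial term involves only the diagonal part. By homogeneity we normalize $\|A\|_{\op}=1$, so that $|A_{II}|\le1$ and every row of $A$ has $\ell^2$ norm at most $1$. The off-diagonal part need not be symmetric, but replacing it by its symmetrization $(A+A^{\mathsf T})/2$ leaves $x^{\mathsf T}Ax$ unchanged and does not increase the operator norm. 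So \cref{lem:offdiag-chaos} applies to a symmetric zero-diagonal matrix $A'$ whose entries satisfy $\sum_J|A'_{IJ}|^2\le1$ for each $I$.

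\emph{Diagonal part.} Put $b_I=A_{II}-p^{-1}\Tr A$, so that $\sum_Ib_I=0$ and $\|b\|_2^2\le\sum|A_{II}|^2\le p$. The diagonal residual is $p^{-1}\sum_Ib_IY_I$. We expand $Y_I=\prod_{i\in I}(1+\delta_i)=\sum_{T\subseteq I}\delta_T$. The $T=\varnothing$ term vanishes because $\sum b_I=0$. The products $\delta_T$ are orthogonal with $\E\delta_T^2=v^{|T|}$, so the variance is
\[
  p^{-2}\sum_{r=1}^dv^r\|M_r^{\mathsf T}b\|_2^2 .
\]
\Cref{lem:incidence-bound} applies since $n>2d$ eventually, and bounds this by
\[
  p^{-1}\sum_{r\ge1}v^r\binom{d-1}{r-1}\binom{n-r-1}{d-r}.
\]
Dividing by $p=\binom nd$ gives the ratio
\[
  \frac{\binom{d-1}{r-1}\binom{n-r-1}{d-r}}{\binom nd}\le\frac rd\cdot\frac{(d)_r}{(n)_r}\cdot\frac{n-d}{n-r}\cdot(\ldots),
\]
and a routine simplification yields terms of order $\frac{d}{n-d}\cdot\frac{1}{(r-1)!}\bigl(\frac{d^2}{n-d}\bigr)^{r-1}$. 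Summing over $r$ gives $\eps^{\mathrm{diag}}_n$ up to the stated form. The only care needed is tracking the factorials.

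\emph{Off-diagonal part.} We use \cref{lem:offdiag-chaos} and orthonormality of the $\Phi_{S_1,S_2}$, which give
\[
  \E|x^{\mathsf T}A'x|^2=\sum|\zeta_{S_1,S_2}|^2 .
\]
The term with $S_1=S_2=\varnothing$ does not occur because $|D|\ge2$, so this expectation is the full variance. To bound $|\zeta|^2$ we apply Cauchy--Schwarz to the sum over $D\subseteq S_1$ in \eqref{eq:chaos-coefficients}, using weights that decay geometrically in $|S_1\setminus D|$, so that the losses are controlled by the factors $\mu_3^2+\tau^2=v$. Next we bound $|\gamma_D(W)|^2$. By \eqref{eq:gamma-DW}, $\gamma_D(W)$ is a sum over $\binom{n-2q-|W|}{d-q-|W|}$ choices of overlap $C$, so Cauchy--Schwarz gives
\[
  |\gamma_D(W)|^2\le\binom{n-2q-|W|}{d-q-|W|}\sum_{C\supseteq W}|\omega_{D,C}|^2 .
\]
We then apply \eqref{eq:alpha-CS} and sum over $W$ of size $k$. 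Each pair $(I,J)$ with $|I\triangle J|=2q$ is counted $\binom{d-q}{k}$ times, and the total $\sum_{I\ne J}|A'_{IJ}|^2\le p$. This yields a bound of the form
\[
  p\sum_{q\ge1}\sum_{k\ge0}c_{q,k}\,v^k .
\]
The binomial ratios should give $c_{1,0}\approx\binom{2}{1}\cdot\frac{\text{const}\cdot d}{n}$, with higher $q$ suppressed by powers of $d/n$ that beat $\binom{2q}{q}$, and with $k$ generating $\exp(2vd^2/(n-d))$. Dividing by $p^2$ leaves $p^{-1}\cdot p\cdot O(d/n)$, which is too weak as it stands. The correct normalization must instead come from $|\gamma_D(W)|^2$ carrying $\binom{\cdot}{\cdot}$ and being divided by $p^2$ relative to the row sums, so we will track the counts carefully to reach $8d(n-d)/(n(n-1))$.

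\emph{Main obstacle.} The hardest step is the off-diagonal bookkeeping. Cauchy--Schwarz over the overlap $C$ must be weighed against the $p^{-2}$ normalization, with the factor $\binom{2q}{q}$ from \eqref{eq:alpha-CS} and the $\mu_3$-cross terms, without losing more than a constant. If the naive Cauchy--Schwarz over $C$ loses too much, we would first try the device from the diagonal case. That means viewing $C\mapsto\omega_{D,C}$ as a vector on the Johnson scheme of $[n]\setminus D$ and bounding $\gamma_D(W)$ through the down-operator estimate \eqref{eq:down-bound}, rather than through crude counting.
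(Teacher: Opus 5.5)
Your route is the paper's: symmetrize, treat the diagonal residual through the $\delta_T$-expansion and \cref{lem:incidence-bound}, and treat the off-diagonal part through \cref{lem:offdiag-chaos}, Parseval, Cauchy--Schwarz over $D\subseteq S_1$, Cauchy--Schwarz over $C\supseteq W$, and \eqref{eq:alpha-CS}. The diagonal part is fine. Your target summand $\frac{d}{n-d}\Lambda_n^{r-1}/(r-1)!$, with $\Lambda_n=d^2/(n-d)$, equals $\frac rd\Lambda_n^r/r!$. The paper gets this from the exact identity $\binom{d-1}{r-1}\binom{n-r-1}{d-r}/\binom nd=\frac rd\frac{n-d}{n-r}\binom dr^2/\binom nr$ together with $\binom dr\le d^r/r!$ and $\binom nr\ge(n-d)^r/r!$.

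The gap is in the off-diagonal part, which you do not complete and whose size you misjudge. Write $Q=x^{\mathsf T}A'x$. Your conclusion that the bound is ``too weak'', and your fallback to a down-operator estimate for $\gamma_D(W)$, come from losing the factor $N_{q,0}=\binom{n-2q}{d-q}$ that Cauchy--Schwarz over $C$ produces. Your $c_{q,k}$ must contain it. Note that $\binom{2q}{q}N_{q,0}=p\rho_q$, where $\rho_q=\binom{2q}{q}\binom{n-2q}{d-q}/\binom nd$ and $\rho_1=2d(n-d)/(n(n-1))$. So the bound you are assembling on $\E|Q|^2$ is of order $p^2d/n$. After division by $p^2$ it is $O(d/n)\to0$, which is exactly what is needed. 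Crude Cauchy--Schwarz over $C$ suffices; no Johnson-scheme refinement is used there.

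What is missing are the three counts that actually produce $\eps^{\mathrm{off}}_n$.

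First, take uniform weights in the Cauchy--Schwarz over $D$, i.e.\ the factor $2^{|S_1|}$. Summing over splittings $W=T_1\sqcup T_2$ then gives $2^{|D|}(2\mu_3^2+\tau^2)^{|W|}\le2^{|D|}(2v)^{|W|}$. Geometric weights with ratio $\epsilon$ give instead $(1+\epsilon)^{|D|}(v+\mu_3^2/\epsilon)^{|W|}$. So the $W$-factor is never reduced to $v$ when $\mu_3\ne0$, and the constants $8$ and $2v$ in \eqref{eq:delta-kappa} correspond to $\epsilon=1$.

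Second, with $r=d-q$, the exact ratio is
\[
N_{q,w}\binom rw/N_{q,0}=(r)_w^2/(w!\,(n-2q)_w)\le\Lambda_n^w/w!,
\]
using $n\ge2d$. This turns the sum over $|W|=w$ into the factor $e^{2v\Lambda_n}$. Setting $\|A^{(q)}\|_\F^2=\sum_{|I\triangle J|=2q}|A'_{IJ}|^2$, you get
\[
  \frac{\E|Q|^2}{p^2}\le e^{2v\Lambda_n}\sum_{q=1}^d4^q\rho_q\frac{\|A^{(q)}\|_\F^2}{p}.
\]

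Third, $\rho_{q+1}/\rho_q\le\beta_n=4dn/(n-2d)^2\to0$. Hence $4^q\rho_q$ is eventually decreasing in $q$, with maximum $4\rho_1=8d(n-d)/(n(n-1))$. Combined with $\sum_q\|A^{(q)}\|_\F^2\le p\|A\|_\op^2$, this gives exactly $\eps^{\mathrm{off}}_n\|A\|_\op^2$.

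Your intuition that higher $q$ are suppressed by powers of $d/n$ is right. However, the decay of $\binom{n-2q}{d-q}/\binom nd$ has to beat $4^q\binom{2q}{q}$, not only $\binom{2q}{q}$.
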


\begin{proof}
  Replacing $A$ by $(A+A^{\mathsf T})/2$ preserves the quadratic form and trace
  and does not increase the operator norm. We may therefore assume that $A$ is
  complex symmetric. Write
  \[
    A=A^{\mathrm{diag}}+A^{\circ},
  \]
  where $A^\circ$ has zero diagonal.

  \emph{Diagonal part.} Put $b_I=A_{II}-p^{-1}\Tr A$,
  $Y_I=\prod_{i\in I}X_i^2$, and $\Delta=\sum_Ib_IY_I$. Then $\sum_Ib_I=0$ and
  \[
    \frac{x^{\mathsf T}A^{\mathrm{diag}}x}{p}
    -R_n\frac{\Tr A}{p}=\frac{\Delta}{p}.
  \]
  Writing $Y_I=\sum_{T\subseteq I}\delta_T$ with
  $\delta_T=\prod_{i\in T}\delta_i$, the zero-sum condition removes the
  constant term and gives
  \[
    \Delta=\sum_{r=1}^d\sum_{|T|=r}(M_r^{\mathsf T}b)_T\delta_T.
  \]
  For distinct subsets $T\ne T'$, $\delta_T$ and $\delta_{T'}$ are orthogonal in
  $L^2$, while
  \[
    \E\delta_T^2=v^{|T|}.
  \]
  Therefore
  \[
    \E|\Delta|^2=\sum_{r=1}^d v^r\|M_r^{\mathsf T}b\|_2^2.
  \]
  By \cref{lem:incidence-bound},
  \[
    \E|\Delta|^2
    \le\|b\|_2^2
    \sum_{r=1}^d v^r
    \binom{d-1}{r-1}
    \binom{n-r-1}{d-r}.
  \]
  Subtracting the mean is an orthogonal projection, so
  \[
    \|b\|_2^2\le\sum_I|A_{II}|^2\le p\|A\|_{\op}^2.
  \]
  The combinatorial identity
  \[
    \frac{
      \binom{d-1}{r-1}\binom{n-r-1}{d-r}
    }{\binom nd}
    =\frac rd\frac{n-d}{n-r}
    \frac{\binom dr^2}{\binom nr}
  \]
  holds, as one checks by expanding the binomial coefficients. Set
  \[
    \Lambda_n=\frac{d^2}{n-d}.
  \]
  The identity, combined with
  \[
    \binom dr\le\frac{d^r}{r!},
    \qquad
    \binom nr\ge\frac{(n-d)^r}{r!}
    \quad (r\le d)
  \]
  and $(n-d)/(n-r)\le1$ gives
  \[
    \frac{
      \binom{d-1}{r-1}\binom{n-r-1}{d-r}
    }p
    \le\frac rd\frac{\Lambda_n^r}{r!}.
  \]
  Hence
  \begin{align}
    \frac{\E|\Delta|^2}{p^2}
     & \le \|A\|_{\op}^2\frac1d
    \sum_{r\ge1}\frac{r(v\Lambda_n)^r}{r!}\nonumber \\
     & =\|A\|_{\op}^2
    \frac{v\Lambda_n}{d}e^{v\Lambda_n}.
    \label{eq:diag-final}
  \end{align}

  \emph{Off-diagonal part.} Let $Q=x^{\mathsf T}A^\circ x$ and use the notation
  of \cref{lem:offdiag-chaos}. Parseval's identity and
  \eqref{eq:chaos-coefficients} give
  \[
    \E|Q|^2=\sum_{S_1\cap S_2=\varnothing}|\zeta_{S_1,S_2}|^2.
  \]
  For each $(S_1,S_2)$, Cauchy--Schwarz over the admissible sets $D\subseteq
  S_1$ yields
  \[
    |\zeta_{S_1,S_2}|^2
    \le 2^{|S_1|}
    \sum_D
    \mu_3^{2|S_1\setminus D|}\tau^{2|S_2|}
    \left|\gamma_D\bigl((S_1\setminus D)\cup S_2\bigr)\right|^2.
  \]
  Fix $D$ and $W\cap D=\varnothing$. The pairs $(S_1,S_2)$ producing this $W$
  correspond exactly to splittings $W=T_1\sqcup T_2$, with $S_1=D\cup T_1$ and
  $S_2=T_2$. Their total weight is
  \begin{align*}
     & \sum_{W=T_1\sqcup T_2}
    2^{|D|+|T_1|}\mu_3^{2|T_1|}\tau^{2|T_2|} \\
     & \qquad=2^{|D|}(2\mu_3^2+\tau^2)^{|W|}
    =2^{|D|}(v+\mu_3^2)^{|W|}
    \le2^{|D|}(2v)^{|W|}.
  \end{align*}
  Consequently,
  \begin{equation}
    \E|Q|^2
    \le\sum_D2^{|D|}
    \sum_{w\ge0}(2v)^w
    \sum_{\substack{|W|=w\\W\cap D=\varnothing}}
    |\gamma_D(W)|^2.
    \label{eq:chaos-parseval-bound}
  \end{equation}

  Fix $D$ with $|D|=2q$ and put $r=d-q$. For $0\le w\le r$, let
  \[
    N_{q,w}=\binom{n-2q-w}{d-q-w}.
  \]
  For a fixed admissible $W$, exactly $N_{q,w}$ sets $C$ in \eqref{eq:gamma-DW}
  contain $W$. Cauchy--Schwarz therefore gives $|\gamma_D(W)|^2\le
  N_{q,w}\sum_{C\supseteq W}|\omega_{D,C}|^2$. After summing over $W$, each
  $r$-set $C$ is counted exactly $\binom rw$ times, so
  \[
    \sum_{\substack{|W|=w\\W\cap D=\varnothing}}|\gamma_D(W)|^2
    \le N_{q,w}\binom rw\sum_C|\omega_{D,C}|^2.
  \]
  Writing $m_q=n-2q$, the ratio has the exact form
  \[
    \frac{N_{q,w}\binom rw}{N_{q,0}}
    =\frac{(r)_w^2}{w!(m_q)_w}
    \le\frac1{w!}
    \left(\frac{r^2}{n-2q-w}\right)^w
    \le\frac1{w!}
    \left(\frac{d^2}{n-d}\right)^w.
  \]
  Here $0\le w\le r=d-q$; the last inequality follows from $n-2q-w\ge n-d-q$ and
  $(d-q)^2/(n-d-q)\le d^2/(n-d)$ when $n\ge2d$ (with the $w=0$ case understood
  directly). Hence
  \[
    \sum_{w\ge0}(2v)^wN_{q,w}\binom rw
    \le N_{q,0}\exp\!\left(\frac{2vd^2}{n-d}\right).
  \]

  Let
  \[
    \|A^{(q)}\|_\F^2
    =\sum_{\substack{I,J\in\Omega_{n,d}\\|I\triangle J|=2q}}|A^\circ_{IJ}|^2
  \]
  and set
  \[
    \rho_q
    =\frac{\binom{2q}{q}\binom{n-2q}{d-q}}{\binom nd}.
  \]
  Cancelling consecutive binomial coefficients gives, for $1\le q<d$,
  \begin{equation}
    \frac{\rho_{q+1}}{\rho_q}
    =\frac{2(2q+1)}{q+1}
    \frac{(d-q)(n-d-q)}{(n-2q)(n-2q-1)}
    \le \beta_n,
    \qquad
    \beta_n=\frac{4dn}{(n-2d)^2}.
    \label{eq:rho-recurrence}
  \end{equation}
  For the inequality, use $2(2q+1)/(q+1)\le4$, $d-q\le d$, $n-d-q\le n$, and
  $(n-2q)(n-2q-1)\ge(n-2d)^2$ when $n>2d$. As $(D,C,U)$ ranges over fixed $q$,
  each ordered pair $(I,J)$ in $A^{(q)}$ occurs once. Combining
  \eqref{eq:chaos-parseval-bound} with \eqref{eq:alpha-CS},
  \[
    \frac{\E|Q|^2}{p^2}
    \le
    \exp\!\left(\frac{2vd^2}{n-d}\right)
    \sum_{q=1}^d4^q\rho_q\frac{\|A^{(q)}\|_\F^2}{p}.
  \]
  Under \eqref{eq:critical-scaling}, $d/n\to0$, hence $\beta_n\to0$ and
  $4\beta_n<1$ eventually. Since
  \[
    \frac{4^{q+1}\rho_{q+1}}{4^q\rho_q}
    =4\frac{\rho_{q+1}}{\rho_q}\le4\beta_n<1,
  \]
  the sequence $4^q\rho_q$ is decreasing for all sufficiently large $n$, and
  \[
    \max_{1\le q\le d}4^q\rho_q
    =4\rho_1=\frac{8d(n-d)}{n(n-1)}.
  \]
  Since the matrices $A^{(q)}$ partition the off-diagonal entries,
  $\sum_q\|A^{(q)}\|_\F^2\le p\|A\|_\op^2$. Therefore
  \[
    \frac{\E|Q|^2}{p^2}\le\eps^{\mathrm{off}}_n\|A\|_\op^2.
  \]
  Combining this with \eqref{eq:diag-final} by Minkowski's inequality gives
  \[
    \left\|
    \frac{x^{\mathsf T}Ax}{p}-R_n\frac{\Tr A}{p}
    \right\|_{L^2}
    \le(\sqrt{\eps^{\mathrm{diag}}_n}+\sqrt{\eps^{\mathrm{off}}_n})\|A\|_\op,
  \]
  which proves \eqref{eq:radial-qf}--\eqref{eq:delta-kappa}.
\end{proof}

If $\E X^3=0$, orthogonality of the symmetric-difference groups gives a sharper
bound. Its specialization to $v=0$ gives \cref{cor:unit-modulus}.

\begin{proposition}[The case $\E X^3=0$]
  \label{prop:sharper-offdiag}
  Assume \eqref{eq:base-assumptions} and $\E X^3=0$, let $A$ be complex
  symmetric, write $A^\circ$ for its zero-diagonal part, and suppose $d/n\to0$.
  Then, for all sufficiently large $n$,
  \begin{equation}
    \frac1{p^2}\E|x^{\mathsf T}A^\circ x|^2
    \le \frac{2d(n-d)}{n(n-1)}
    e^{vd^2/(n-2d)}\|A\|_\op^2.
    \label{eq:sharper-offdiag}
  \end{equation}
\end{proposition}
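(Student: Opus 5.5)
With $\mu_3=0$ we have $h_2=X^2-1$ up to the factor $\tau=\sqrt v$, so the plan is to rerun the off-diagonal part of the proof of \cref{thm:radial-qf} in a coarser basis. We keep the whole group belonging to each symmetric difference $D$ together rather than splitting it into the $\Phi_{S_1,S_2}$, and we avoid the $2^{|S_1|}$ Cauchy--Schwarz loss by exact orthogonality. Put $Q=x^{\mathsf T}A^\circ x$. As in \eqref{eq:Q-decomp},
\[
  Q=\sum_{q=1}^d\sum_{|D|=2q}\Bigl(\prod_{i\in D}X_i\Bigr)G_D,
  \qquad
  G_D=\sum_{\substack{|C|=d-q\\ C\cap D=\varnothing}}\omega_{D,C}\prod_{i\in C}X_i^2 .
\]

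\emph{Step 1: the groups are orthogonal.} Take $D\ne D'$. In the product $\prod_D X_i\,G_D\cdot\overline{\prod_{D'}X_i\,G_{D'}}$, a coordinate $i\in D\triangle D'$ appears to the power $1$ or $3$, multiplied by functions of the other coordinates. Here $\overline{G_{D'}}$ has conjugated coefficients, but the variables are real. Since $\E X=\E X^3=0$ and the coordinates are independent, every such term has mean zero. Therefore
\[
  \E|Q|^2=\sum_D\E\Bigl|\prod_{i\in D}X_i\Bigr|^2\,\E|G_D|^2=\sum_D\E|G_D|^2 ,
\]
where the middle equality uses $D\cap C=\varnothing$ and independence. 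This is the step where $\mu_3=0$ is used. The rest is bookkeeping already present in the proof of \cref{thm:radial-qf}.

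\emph{Step 2: variance of each group.} Expand $\prod_{i\in C}X_i^2=\sum_{W\subseteq C}\delta_W$. This gives $G_D=\sum_{W\cap D=\varnothing}\gamma_D(W)\delta_W$, with $\gamma_D$ as in \eqref{eq:gamma-DW}. Orthogonality of the $\delta_W$ then yields $\E|G_D|^2=\sum_W v^{|W|}|\gamma_D(W)|^2$. Next apply the same Cauchy--Schwarz and counting as before, namely $\sum_{|W|=w}|\gamma_D(W)|^2\le N_{q,w}\binom rw\sum_C|\omega_{D,C}|^2$, where $r=d-q$. The ratio is $(r)_w^2/(w!\,(n-2q)_w)$, and we bound it termwise by
\[
  \frac{(r-j)^2}{n-2q-j}\le\frac{d^2}{n-2d},
  \qquad j<w\le r .
\]
This works because $n-2q-j\ge n-2d$ when $q+j\le d$ and $2q\le\ldots$; more precisely, $n-2q-j\ge n-q-d\ge n-2d$. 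Summing over $w$ gives
\[
  \E|G_D|^2\le N_{q,0}\,e^{vd^2/(n-2d)}\sum_C|\omega_{D,C}|^2 ,
\]
with weight $v^w$ rather than $(2v)^w$.

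\emph{Step 3: summing over $q$.} Now use \eqref{eq:alpha-CS}. Each ordered pair $(I,J)$ with $|I\triangle J|=2q$ occurs exactly once as $(D,C,U)$. This gives
\[
  \frac{\E|Q|^2}{p^2}\le e^{vd^2/(n-2d)}\sum_{q=1}^d\rho_q\,\frac{\|A^{(q)}\|_\F^2}{p},
\]
with no factor $4^q$. By \eqref{eq:rho-recurrence}, $\rho_{q+1}/\rho_q\le\beta_n\to0$, so for large $n$ the sequence $\rho_q$ is decreasing. Its maximum is
\[
  \rho_1=\frac{2\binom{n-2}{d-1}}{\binom nd}=\frac{2d(n-d)}{n(n-1)} .
\]
Finally, $\sum_q\|A^{(q)}\|_\F^2\le\|A\|_\F^2\le p\|A\|_\op^2$, which gives \eqref{eq:sharper-offdiag}.

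The only genuinely new point, and the one I would check most carefully, is the orthogonality in Step 1. One must confirm that every coordinate of $D\triangle D'$ really carries an odd power, whether it lies in the overlap sets $C$ or $C'$ or outside both. It does: a coordinate in $D\setminus D'$ is in neither $C$ nor $D$'s complement in a way that could add a factor $X_i$, and $C\cup D$ is disjoint from $D$, so only $X_i^2$ factors can join. The sharpened exponent in Step 2 is a routine check.
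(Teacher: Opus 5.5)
Your proposal is correct and follows the paper's proof. It uses orthogonality of the $D$-groups from $\E X=\E X^3=0$, the per-group bound $\E|G_D|^2\le\binom{n-2q}{d-q}e^{vd^2/(n-2d)}\sum_C|\omega_{D,C}|^2$, then \eqref{eq:alpha-CS} and the decrease of $\rho_q$. The only difference is cosmetic: the paper gets the per-group bound from the Perron row sum $\binom{m_q}{r}\E B^K$ of the Gram matrix $B^{|C\cap C'|}$ together with Maclaurin's inequality, while your $\delta_W$ expansion with termwise bounds evaluates exactly that row sum, $\sum_w v^w N_{q,w}\binom rw$. In your closing remark, the fact actually needed is just $C\cap D=\varnothing$, not ``$C\cup D$ disjoint from $D$''.
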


\begin{proof}
  Let $Q=x^{\mathsf T}A^\circ x$ and use the decomposition \eqref{eq:Q-decomp}.
  When $\mu_3=0$, it takes the form
  \[
    Q=\sum_{q=1}^d\sum_{|D|=2q}
    X_D\sum_{\substack{|C|=d-q\\C\cap D=\varnothing}}\omega_{D,C}Y_C,
    \qquad
    X_D=\prod_{i\in D}X_i,\quad Y_C=\prod_{i\in C}X_i^2.
  \]
  This is the special case of \cref{lem:offdiag-chaos} in which only $D=S_1$
  contributes to each coefficient; we argue directly to get the better constant.
  If $D\ne D'$, the corresponding summands are orthogonal in complex $L^2$.
  Indeed, choose $i\in D\triangle D'$. In the product of a $D$-summand and the
  complex conjugate of a $D'$-summand, the total power of $X_i$ is either one or
  three. The assumptions $\E X=\E X^3=0$ make the expectation zero.

  Fix $D$ with $|D|=2q$, put $m_q=n-2q$ and $r=d-q$, and index $C$ over the
  $r$-subsets of the $m_q$ coordinates outside $D$. Independence and
  $\E X_D^2=1$ remove the factor $X_D$ from the second moment. On the remaining
  coordinates,
  \[
    \E(Y_CY_{C'})=B^{|C\cap C'|}.
  \]
  The second-moment matrix
  \[
    \Sigma_{C,C'}=B^{|C\cap C'|}
  \]
  is symmetric and entrywise nonnegative, with constant row sum
  \[
    \binom{m_q}{r}\E B^K,
  \]
  where $K$ is the intersection size of a fixed $r$-subset and a uniformly
  random $r$-subset of an $m_q$-set. Therefore its spectral norm is at most this
  row sum (indeed, the constant vector is a Perron eigenvector).

  To bound $\E B^K$, view $B^K$ as the product of $r$ numbers sampled without
  replacement from a population containing $r$ copies of $B$ and $m_q-r$ copies
  of $1$. Maclaurin's inequality for elementary symmetric means yields
  \[
    \E B^K
    \le\left(1+\frac{(B-1)r}{m_q}\right)^r
    \le\exp\!\left(\frac{vr^2}{m_q}\right)
    \le\exp\!\left(\frac{vd^2}{n-2d}\right).
  \]
  In \eqref{eq:alpha-CS}, each choice $(D,C,U)$ specifies exactly one ordered
  pair $(I,J)$ with $I\triangle J=D$ and $I\cap J=C$. Summing over these choices
  and using orthogonality in $D$ gives
  \begin{align*}
    \E|Q|^2
     & \le e^{vd^2/(n-2d)}
    \max_{1\le q\le d}
    \left[
      \binom{2q}{q}\binom{n-2q}{d-q}
      \right]
    \|A^{\circ}\|_\F^2.
  \end{align*}
  The ratio of the combinatorial factor to $p$ is
  \begin{equation}
    \frac{\binom{2q}{q}\binom{n-2q}{d-q}}{\binom nd}
    =\binom{2q}{q}
    \frac{(d)_q(n-d)_q}{(n)_{2q}}.
    \label{eq:offdiag-ratio}
  \end{equation}
  Denote the ratio in \eqref{eq:offdiag-ratio} by $\rho_q$, consistently with
  the notation above. By \eqref{eq:rho-recurrence}, if $d/n\to0$ then eventually
  $\beta_n<1$. Thus $\rho_q$ is decreasing and
  \[
    \max_{1\le q\le d}\rho_q=\rho_1
    =\frac{2d(n-d)}{n(n-1)}.
  \]
  Also
  \[
    \|A^{\circ}\|_\F^2
    \le\|A\|_\F^2
    \le p\|A\|_{\op}^2.
  \]
  Hence
  \[
    \frac{\E|Q|^2}{p^2}
    \le
    \frac{2d(n-d)}{n(n-1)}
    e^{vd^2/(n-2d)}\|A\|_{\op}^2.
  \]
  The right-hand side tends to zero whenever
  \[
    \frac dn\exp\!\left(\frac{vd^2}{n-2d}\right)\longrightarrow0.
  \]
  When $v=0$, this holds throughout $d/n\to0$.
\end{proof}

\begin{remark}
  \label{rem:third-moment}
  Centering alone does not give the block orthogonality used in
  \cref{prop:sharper-offdiag}. Let $\Pp(X=\sqrt2)=1/3$ and
  $\Pp(X=-1/\sqrt2)=2/3$. Then $\E X=0$, $\E X^2=1$, but $\E X^3=1/\sqrt2$. The
  monomials corresponding to $(D,C)=(\{1,2\},\{3\})$ and
  $(D',C')=(\{1,3\},\{2\})$ have covariance
  \[
    \E[X_1^2X_2^3X_3^3]=(\E X^3)^2=\frac12.
  \]
  More generally, for $D=I\triangle J$, $D'=I'\triangle J'$, $C=I\cap J$, and
  $C'=I'\cap J'$, independence gives the exact formula
  \[
    \E[x_Ix_Jx_{I'}x_{J'}]
    =\1_{\{D\setminus D'\subset C',\ D'\setminus D\subset C\}}
    (\E X^3)^{|D\triangle D'|}B^{|C\cap C'|}.
  \]
  Indeed, coordinates in $D\triangle D'$ occur to the first power unless the
  displayed containments hold, in which case they occur to the third power;
  coordinates in $D\cap D'$ and the remaining coordinates of $C\triangle C'$
  occur to the second power, and those in $C\cap C'$ to the fourth.
\end{remark}

\section{Resolvent analysis and the covariance principle}
\label{sec:uniqueness}

The tensor estimates are now available. To pass from one sample to the
covariance matrix, the quadratic-form estimate must be applied to a random
leave-one-out resolvent. Independence allows this by conditioning; a lower
bound on the Sherman--Morrison denominator then controls the replacement
error. We prove the argument under the hypotheses of
\cref{thm:abstract-radial}, so it also applies to the weighted Gaussian model
used later to compute moments.

For a Hermitian matrix $H$ and $z=E+i\eta\in\C_+$ write $G_H(z)=(H-zI)^{-1}$.

\begin{lemma}
  \label{lem:denominator}
  Let $t_1,\ldots,t_k\ge0$ and $w_1,\ldots,w_k\ge0$, and put $u(z)=\sum_{j=1}^k
  w_j/(t_j-z)$ for $z=E+i\eta\in\C_+$. Then
  \[
    |1+u(z)|\ge\frac{\eta}{|z|}.
  \]
\end{lemma}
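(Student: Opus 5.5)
We need $|1+u(z)|\ge \eta/|z|$ for $u(z)=\sum_j w_j/(t_j-z)$ with $t_j,w_j\ge0$ and $z=E+i\eta\in\C_+$. The idea is to bound the real and imaginary parts of $1+u$ separately, splitting according to the size of $\sum_j w_j/|t_j-z|^2$. Because $\Imc(1/(t-z))=\eta/|t-z|^2$, the quantity
\[
  \sigma=\sum_j\frac{w_j}{|t_j-z|^2}\ge0
\]
gives $\Imc(1+u)=\eta\sigma$. For the real part, write $\operatorname{Re}(1/(t-z))=(t-E)/|t-z|^2$. The key algebraic observation is
\[
  \frac{t}{t-z}=1+\frac{z}{t-z},
\]
which will be combined with $t_j\ge0$.

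The cleanest route is to consider $z(1+u(z))$ rather than $1+u$ directly, since
\[
  z\,(1+u)=z+\sum_j w_j\frac{z}{t_j-z}=z+\sum_j w_j\Bigl(\frac{t_j}{t_j-z}-1\Bigr).
\]
Its imaginary part is
\[
  \Imc\bigl(z(1+u)\bigr)=\eta+\sum_j w_j\,t_j\,\frac{\eta}{|t_j-z|^2}.
\]
Every term here is nonnegative because $t_j,w_j\ge0$ and $\eta>0$. Hence $\Imc(z(1+u))\ge\eta$, so $|z|\,|1+u|\ge|z(1+u)|\ge\eta$, and dividing by $|z|>0$ gives the claim.

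The only place needing care is the sign bookkeeping. One must check that $t_j/(t_j-z)$ has imaginary part $t_j\eta/|t_j-z|^2\ge0$, which holds because $\Imc\bigl(1/(t-z)\bigr)=\eta/|t-z|^2$. One must also check that the subtracted constants $w_j$ are real and do not affect the imaginary part. Nothing else is an obstacle: the argument is a one-line Nevanlinna-type positivity computation, and $|z|>0$ is automatic since $\eta>0$.
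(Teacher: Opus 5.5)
Your proof is correct and is the paper's argument: multiply by $z$, use $\Imc\frac{z}{t_j-z}=\frac{t_j\eta}{|t_j-z|^2}\ge0$ to get $\Imc[z(1+u(z))]\ge\eta$, and conclude $|z|\,|1+u(z)|\ge\eta$. The opening discussion of $\sigma$ and of the real part is never used and can be dropped.
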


\begin{proof}
  For $t\ge0$, $\Imc\frac{z}{t-z}=\frac{t\eta}{(t-E)^2+\eta^2}\ge0$, so
  $\Imc[z(1+u(z))]=\eta+\sum_jw_j\Imc\frac{z}{t_j-z}\ge\eta$, and
  $|z|\,|1+u(z)|\ge\eta$.
\end{proof}

\begin{lemma}
  \label{lem:rank-one-trace}
  Let $H,H'$ be Hermitian $p\times p$ matrices with
  $\operatorname{rank}(H-H')\le1$, and $z=E+i\eta\in\C_+$. Then
  \[
    \left|\frac1p\Tr G_H(z)-\frac1p\Tr G_{H'}(z)\right|
    \le\frac{\pi}{p\eta}.
  \]
\end{lemma}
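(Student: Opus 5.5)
The plan is to use the fact that a Hermitian matrix is determined, up to a rank-one change, by the counting function of its eigenvalues, and that this counting function moves very little under a rank-one perturbation. Write $F_H(t)=p^{-1}\#\{\text{eigenvalues of }H\le t\}$ for the empirical distribution function. By Cauchy interlacing (Weyl's inequalities for a rank-one perturbation), if $\operatorname{rank}(H-H')\le1$ then
\[
  \sup_{t\in\R}|F_H(t)-F_{H'}(t)|\le\frac1p.
\]
This holds for both signs of the perturbation. Write $H'=H+\sigma ww^*$ with $\sigma\in\R$; the eigenvalues of the two matrices interlace, so the counting functions differ by at most one at every $t$.

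Next express the normalized trace as a Stieltjes integral and integrate by parts. For $z=E+i\eta$,
\[
  \frac1p\Tr G_H(z)-\frac1p\Tr G_{H'}(z)
  =\int_{\R}\frac{1}{t-z}\,\dd(F_H-F_{H'})(t)
  =\int_{\R}\frac{F_H(t)-F_{H'}(t)}{(t-z)^2}\,\dd t.
\]
The boundary terms vanish because $F_H-F_{H'}$ is zero outside a compact set (both spectra are bounded), and $1/(t-z)\to0$ at infinity. Hence the difference is at most
\[
  \frac1p\int_{\R}\frac{\dd t}{|t-z|^2}
  =\frac1p\int_{\R}\frac{\dd t}{(t-E)^2+\eta^2}=\frac{\pi}{p\eta},
\]
which is the claimed bound.

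Every step here is short. The one point needing care is the integration by parts against the signed measure $\dd(F_H-F_{H'})$. This is justified because the difference is a finite step function with compact support, and $t\mapsto(t-z)^{-1}$ is smooth on $\R$, so one can check the identity directly term by term over the finitely many jumps. The interlacing statement for a rank-one perturbation of indefinite sign should also be stated explicitly, via Weyl's inequality $\lambda_{k-1}(H)\le\lambda_k(H')\le\lambda_{k+1}(H)$.
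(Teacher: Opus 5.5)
Your proof is correct and follows the same route as the paper's. The paper also bounds $\sup_t|F_H(t)-F_{H'}(t)|\le 1/p$, citing the rank inequality of Bai--Silverstein (your Weyl interlacing argument proves this directly). It then integrates by parts against $(t-z)^{-1}$ to get the bound $\frac1p\int_{\R}|t-z|^{-2}\,\dd t=\pi/(p\eta)$.
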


\begin{proof}
  By the rank inequality \cite[Theorem~A.43]{BaiSilverstein2010}, the spectral
  distribution functions satisfy $\sup_t|F_H(t)-F_{H'}(t)|\le1/p$. Integrating
  by parts, $\bigl|\int\frac{\dd(F_H-F_{H'})(t)}{t-z}\bigr|
  =\bigl|\int\frac{(F_H-F_{H'})(t)}{(t-z)^2}\,\dd t\bigr|
  \le\frac1p\int_\R\frac{\dd t}{|t-z|^2}=\frac{\pi}{p\eta}$.
\end{proof}

\begin{lemma}
  \label{lem:uniqueness}
  Let $\nu$ be a probability measure on $[0,\infty)$, $c>0$, and $z\in\C_+$. The
  equation
  \begin{equation}
    \frac1s+z
    =\int_0^\infty\frac{r}{1+crs}\,\nu(\dd r)
    \label{eq:general-fixed}
  \end{equation}
  has at most one solution $s\in\C_+$.
\end{lemma}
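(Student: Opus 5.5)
Suppose $s_1,s_2\in\C_+$ both solve \eqref{eq:general-fixed}; the plan is to subtract the two equations and show that the resulting linear relation forces $s_1=s_2$. Subtracting gives
\[
  \frac{s_2-s_1}{s_1s_2}
  =\int_0^\infty\Bigl(\frac{r}{1+crs_1}-\frac{r}{1+crs_2}\Bigr)\nu(\dd r)
  =(s_2-s_1)\int_0^\infty\frac{cr^2}{(1+crs_1)(1+crs_2)}\,\nu(\dd r).
\]
If $s_1\ne s_2$ we divide by $s_2-s_1$ and obtain
\[
  1=\int_0^\infty\frac{c r^2 s_1s_2}{(1+crs_1)(1+crs_2)}\,\nu(\dd r).
\]
The goal is a contradiction via Cauchy--Schwarz: it suffices to show that for each $j$,
\[
  \int_0^\infty\frac{cr^2|s_j|^2}{|1+crs_j|^2}\,\nu(\dd r)<1 .
\]

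To get this strict bound, take imaginary parts in the equation for a single solution $s=s_j$. Since $\Imc(1/s)=-\Imc s/|s|^2$ and $\Imc\frac{r}{1+crs}=-\frac{cr^2\Imc s}{|1+crs|^2}$, we get
\[
  -\frac{\Imc s}{|s|^2}+\eta=-\Imc s\int_0^\infty\frac{cr^2}{|1+crs|^2}\,\nu(\dd r),\qquad \eta=\Imc z>0.
\]
Multiplying by $|s|^2/\Imc s>0$ yields
\[
  \int_0^\infty\frac{cr^2|s|^2}{|1+crs|^2}\,\nu(\dd r)=1-\frac{\eta|s|^2}{\Imc s}<1 .
\]
Here $1+crs\ne0$ because $\Imc s>0$ and $r\ge0$, so all integrands are well defined. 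The integral is finite because $|r/(1+crs)|\le 1/(c\Imc s)$; this also shows the right side of \eqref{eq:general-fixed} is finite without any moment assumption on $\nu$.

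Combining the two steps, Cauchy--Schwarz gives
\[
  1\le\Bigl(\int\frac{cr^2|s_1|^2}{|1+crs_1|^2}\,\dd\nu\Bigr)^{1/2}\Bigl(\int\frac{cr^2|s_2|^2}{|1+crs_2|^2}\,\dd\nu\Bigr)^{1/2}<1,
\]
which is the desired contradiction, so $s_1=s_2$. The main point to get right is the imaginary-part identity: it shows the Cauchy--Schwarz factors are strictly below $1$, with the deficit $\eta|s|^2/\Imc s$ coming from $z\in\C_+$. The rest is routine bookkeeping of signs and of the finiteness of the integrals.
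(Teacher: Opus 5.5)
Your proposal is correct and follows essentially the same route as the paper: take imaginary parts to get the strict bound $c|s|^2\int r^2|1+crs|^{-2}\,\nu(\dd r)=1-\eta|s|^2/\Imc s<1$ for each solution, then subtract the two equations, divide by $s_1-s_2$, and reach a contradiction by Cauchy--Schwarz. Your added remarks that $1+crs\neq0$ and that the integrals are finite are correct and fill small details the paper leaves implicit.
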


\begin{proof}
  Let $s=u+iy\in\C_+$ solve \eqref{eq:general-fixed}, $z=E+i\eta$. Taking
  imaginary parts, $-y/|s|^2+\eta=-cy\int r^2|1+crs|^{-2}\,\nu(\dd r)$, that is,
  \begin{equation}
    c|s|^2\int\frac{r^2}{|1+crs|^2}\,\nu(\dd r)
    =1-\frac{\eta|s|^2}{y}<1.
    \label{eq:stability-strict}
  \end{equation}
  If $s_1\ne s_2$ are two solutions, subtracting the equations and dividing by
  $s_1-s_2$ gives
  \[
    \frac1{s_1s_2}
    =c\int\frac{r^2}{(1+crs_1)(1+crs_2)}\,\nu(\dd r),
  \]
  and Cauchy--Schwarz bounds the modulus of the right side by
  $|s_1|^{-1}|s_2|^{-1}$ times the geometric mean of the two quantities in
  \eqref{eq:stability-strict}, each of which is less than one. This is a
  contradiction.
\end{proof}

\begin{lemma}
  \label{lem:as-upgrade}
  Let $y_{n,1},\ldots,y_{n,N_n}\in\mathbb F^{p_n}$ be independent, $\mathbb
  F\in\{\R,\C\}$, and
  $W_n=\frac1{N_n}\sum_{\alpha=1}^{N_n}y_{n,\alpha}y_{n,\alpha}^*$. Suppose
  $p_n/N_n\to c\in(0,\infty)$ and $\sum_ne^{-ap_n}<\infty$ for every $a>0$. If
  $\ESD(W_n)$ converges weakly in probability to a deterministic probability
  measure $\mu$, then it converges almost surely.
\end{lemma}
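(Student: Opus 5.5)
The plan is to use concentration of the Stieltjes transform together with Borel--Cantelli, and then pass to weak convergence through a countable determining family of test points. Write $s_n(z)=p_n^{-1}\Tr G_{W_n}(z)$.

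\emph{Concentration.} For fixed $z\in\C_+$, $s_n(z)$ is a function of the independent vectors $y_{n,1},\ldots,y_{n,N_n}$. Replacing one vector changes $W_n$ by a perturbation of rank at most two. Applying \cref{lem:rank-one-trace} twice bounds the resulting change in $s_n(z)$ by $2\pi/(p_n\eta)$. McDiarmid's bounded-difference inequality, applied separately to the real and imaginary parts, then gives
\[
  \Pp\bigl(|s_n(z)-\E s_n(z)|>t\bigr)\le4\exp\!\left(-\frac{c_0\,t^2p_n^2\eta^2}{N_n}\right).
\]
Since $p_n/N_n\to c$, the exponent is of order $t^2\eta^2p_n$. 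The summability assumption $\sum_n e^{-ap_n}<\infty$, taken with $a$ of order $t^2\eta^2$, makes these probabilities summable. Borel--Cantelli then yields $s_n(z)-\E s_n(z)\to0$ almost surely for each fixed $z$.

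\emph{Identifying the limit.} Convergence in probability of $\ESD(W_n)$ to $\mu$ gives $s_n(z)\to s_\mu(z)$ in probability, because $t\mapsto1/(t-z)$ is bounded and continuous. Since $|s_n|\le1/\eta$, bounded convergence gives $\E s_n(z)\to s_\mu(z)$. Combined with the previous step, $s_n(z)\to s_\mu(z)$ almost surely for each fixed $z$.

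\emph{Countable set and weak convergence.} Take $z$ in the countable set $\Q+i\Q_{>0}$ and intersect the corresponding full-measure events. On this intersection, $s_n(z)\to s_\mu(z)$ for every $z$ in a set with accumulation points in $\C_+$. The $s_n$ are analytic and locally uniformly bounded, so by Vitali's theorem $s_n\to s_\mu$ on all of $\C_+$. Because $\mu$ is a probability measure, this already forces tightness; equivalently, $\Imc s_n(iy)\,y\to\Imc s_\mu(iy)\,y\to1$ as $y\to\infty$, which rules out loss of mass. The standard Stieltjes continuity theorem then gives weak convergence of $\ESD(W_n)$ to $\mu$ almost surely.

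The main technical point is the bounded-difference step: the rank-two replacement bound must be tracked with constants so that the exponent is linear in $p_n$. That linearity is exactly what the hypothesis $\sum_n e^{-ap_n}<\infty$ is designed to match.
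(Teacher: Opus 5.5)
Your proof is correct, but it takes a different route from the paper.

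\textbf{The paper's route.} The paper concentrates one scalar that already metrizes weak convergence: the L\'evy distance $\ell_n$ between $\ESD(W_n)$ and $\mu$. Replacing one sample is a rank-two perturbation, so by the rank inequality the ESD moves by at most $2/p_n$ in Kolmogorov distance, and hence $\ell_n$ moves by at most $2/p_n$. McDiarmid then gives $\Pp(\ell_n-\E\ell_n>t)\le\exp(-t^2p_n^2/(2N_n))$. Since $0\le\ell_n\le1$ and $\ell_n\to0$ in probability, $\E\ell_n\to0$, and Borel--Cantelli finishes in one step.

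\textbf{Your route.} You concentrate $s_n(z)$ at each fixed $z$. You route the rank bound through \cref{lem:rank-one-trace}, which is itself the rank inequality after an integration by parts, at the cost of the factor $1/\eta$. You then have to reassemble weak convergence: a countable set of test points, Vitali (or simply the equicontinuity bound $|s_n(z)-s_n(z')|\le|z-z'|/(\Imc z\,\Imc z')$), and the Stieltjes continuity theorem.

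\textbf{Comparison.} The paper's version is shorter because it never leaves distribution functions, so no identification of $\lim\E s_n(z)$ and no tightness argument is needed. Yours reuses the resolvent machinery of the main argument and gives a quantitative concentration bound for $s_n(z)$ with explicit $\eta$-dependence.

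\textbf{One imprecision.} Your tightness sentence tacitly interchanges the limits $n\to\infty$ and $y\to\infty$. The clean version is as follows. Any vague subsequential limit $\nu$ of $\ESD(W_n)$ satisfies $s_\nu=s_\mu$ on $\C_+$. Hence $\nu=\mu$, and $\nu(\R)=\lim_{y\to\infty}y\,\Imc s_\nu(iy)=1$, so no mass escapes. This is exactly what the standard continuity theorem packages, so citing it suffices.
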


\begin{proof}
  Let $\ell_n$ be the L\'evy distance between $\ESD(W_n)$ and $\mu$. Changing
  one sample changes $W_n$ by a matrix of rank at most two, hence, by the rank
  inequality \cite[Theorem~A.43]{BaiSilverstein2010}, changes $\ESD(W_n)$ by at
  most $2/p_n$ in Kolmogorov distance, and therefore changes $\ell_n$ by at most
  $2/p_n$. McDiarmid's inequality gives
  $\Pp(\ell_n-\E\ell_n>t)\le\exp(-t^2p_n^2/(2N_n))$. Since $0\le\ell_n\le1$ and
  $\ell_n\to0$ in probability, $\E\ell_n\to0$. For fixed $\eps>0$, the bound
  with $t=\eps/2$ is eventually at most $e^{-a_\eps p_n}$ for some
  $a_\eps>0$. Borel--Cantelli gives $\ell_n\to0$ almost surely, without any
  independence assumption between different $n$.
\end{proof}

\begin{proof}[Proof of \cref{thm:abstract-radial}]
  Write $R_{n,\alpha}=\|x_n^{(\alpha)}\|^2/p_n$ and
  $\widehat\nu_n=\frac1{N_n}\sum_{\alpha=1}^{N_n}\delta_{R_{n,\alpha}}$.

  \emph{Empirical radii.} For bounded continuous $f$, independence gives
  \[
    \Var\!\left(\frac1{N_n}\sum_{\alpha=1}^{N_n}f(R_{n,\alpha})\right)
    \le \frac{\|f\|_\infty^2}{N_n}\longrightarrow0,
  \]
  and $\E f(R_n)\to\int f\,\dd\nu$ because $R_n\Rightarrow\nu$. Hence the
  empirical averages converge in probability to $\int f\,\dd\nu$, and applying
  this to a countable convergence-determining family of bounded continuous
  functions on $[0,\infty)$ gives $\widehat\nu_n\Rightarrow\nu$ weakly in
  probability. For the first moment, fix $L>0$. The same variance bound applies
  to the bounded variables $R_{n,\alpha}\wedge L$, so their empirical mean
  differs from $\E(R_n\wedge L)$ by $o_p(1)$. The empirical tail
  $\frac1{N_n}\sum_\alpha R_{n,\alpha}\1_{\{R_{n,\alpha}>L\}}$ has expectation
  $\E[R_n\1_{\{R_n>L\}}]$, which by uniform integrability is small uniformly in
  $n$ when $L$ is large, so by Markov's inequality the empirical tail is small
  in probability. Uniform integrability also gives
  $\E R_n\to\int r\,\nu(\dd r)<\infty$. Letting $L\to\infty$,
  \begin{equation}
    \frac1{N_n}\sum_{\alpha=1}^{N_n}R_{n,\alpha}
    \xrightarrow{p}\int r\,\nu(\dd r).
    \label{eq:abstract-radius-mean}
  \end{equation}
  The first spectral moment is $p_n^{-1}\Tr W_n=N_n^{-1}\sum_\alpha
  R_{n,\alpha}$. Hence, for $M>0$,
  \[
    \ESD(W_n)([M,\infty))\le\frac{\Tr W_n}{Mp_n}.
  \]
  Together with \eqref{eq:abstract-radius-mean}, this proves tightness in
  probability.

  \emph{The resolvent equation.} Fix $z=E+i\eta\in\C_+$ and put $c_n=p_n/N_n$,
  $G_n=G_{W_n}(z)$ and $s_n(z)=\frac1{p_n}\Tr G_n$. For each $\alpha$ set
  \[
    W_n^{(\alpha)}=W_n-\frac1{N_n}x_n^{(\alpha)}{x_n^{(\alpha)}}^*,
    \qquad
    G_n^{(\alpha)}=G_{W_n^{(\alpha)}}(z),
    \qquad
    s_n^{(\alpha)}(z)=\frac1{p_n}\Tr G_n^{(\alpha)},
  \]
  \[
    a_{n,\alpha}=\frac1{p_n}{x_n^{(\alpha)}}^*G_n^{(\alpha)}x_n^{(\alpha)}.
  \]
  The identity $W_nG_n=I+zG_n$ gives
  $1+zs_n=\frac1{p_nN_n}\sum_\alpha{x_n^{(\alpha)}}^*G_nx_n^{(\alpha)}$, and the
  Sherman--Morrison formula gives
  ${x_n^{(\alpha)}}^*G_nx_n^{(\alpha)}=p_na_{n,\alpha}/(1+c_na_{n,\alpha})$.
  Hence
  \[
    1+zs_n(z)
    =\frac1{N_n}\sum_{\alpha=1}^{N_n}
    \frac{a_{n,\alpha}}{1+c_na_{n,\alpha}}.
  \]

  We control the denominators with \cref{lem:denominator}. Diagonalize
  $W_n^{(\alpha)}=\sum_jt_ju_ju_j^*$ with $t_j\ge0$. Then
  \[
    c_na_{n,\alpha}=\frac1{N_n}\sum_j\frac{|u_j^*x_n^{(\alpha)}|^2}{t_j-z},
    \qquad
    c_nrs_n^{(\alpha)}(z)=\frac r{N_n}\sum_j\frac1{t_j-z}
    \quad(r\ge0),
  \]
  and $c_nrs_n(z)$ has the same form with the eigenvalues of $W_n$. All three
  are of the type considered in \cref{lem:denominator}, so
  \begin{equation}
    |1+c_na_{n,\alpha}|^{-1},\quad
    |1+c_nrs_n^{(\alpha)}(z)|^{-1},\quad
    |1+c_nrs_n(z)|^{-1}
    \ \le\ \frac{|z|}{\eta}
    \qquad(r\ge0).
    \label{eq:abstract-denominators}
  \end{equation}

  We first replace $a_{n,\alpha}$ by $R_{n,\alpha}s_n^{(\alpha)}(z)$. The
  resolvent $G_n^{(\alpha)}$ is independent of $x_n^{(\alpha)}$ and
  $\|G_n^{(\alpha)}\|_\op\le1/\eta$, so applying \eqref{eq:abstract-radial}
  conditionally on $G_n^{(\alpha)}$ to the matrix $\eta G_n^{(\alpha)}$ gives,
  uniformly in $\alpha$,
  \begin{equation}
    \E\left|a_{n,\alpha}-R_{n,\alpha}s_n^{(\alpha)}(z)\right|^2
    \le\frac{\eps_n}{\eta^2}.
    \label{eq:abstract-loo-qf}
  \end{equation}
  For any $a,b$ with $1+c_na\ne0\ne1+c_nb$,
  \[
    \frac a{1+c_na}-\frac b{1+c_nb}
    =\frac{a-b}{(1+c_na)(1+c_nb)},
  \]
  so by \eqref{eq:abstract-denominators}, with
  $b=R_{n,\alpha}s_n^{(\alpha)}(z)$,
  \[
    \left|
    \frac{a_{n,\alpha}}{1+c_na_{n,\alpha}}
    -\frac{R_{n,\alpha}s_n^{(\alpha)}(z)}{1+c_nR_{n,\alpha}s_n^{(\alpha)}(z)}
    \right|
    \le\left(\frac{|z|}{\eta}\right)^2
    \bigl|a_{n,\alpha}-R_{n,\alpha}s_n^{(\alpha)}(z)\bigr|.
  \]
  Averaging over $\alpha$ and using Cauchy--Schwarz with
  \eqref{eq:abstract-loo-qf} bounds the expected error by
  $|z|^2\sqrt{\eps_n}/\eta^3$, which tends to zero. Thus
  \begin{equation}
    1+zs_n(z)
    =\frac1{N_n}\sum_{\alpha=1}^{N_n}
    \frac{R_{n,\alpha}s_n^{(\alpha)}(z)}{1+c_nR_{n,\alpha}s_n^{(\alpha)}(z)}
    +o_p(1).
    \label{eq:abstract-first-replacement}
  \end{equation}

  Next we replace $s_n^{(\alpha)}$ by $s_n$. For $r\ge0$ and $w_1,w_2$ with
  $|1+c_nrw_i|^{-1}\le|z|/\eta$, the same algebraic identity gives
  \[
    \left|\frac{rw_1}{1+c_nrw_1}-\frac{rw_2}{1+c_nrw_2}\right|
    =\frac{r|w_1-w_2|}{|1+c_nrw_1|\,|1+c_nrw_2|}
    \le r\left(\frac{|z|}{\eta}\right)^2|w_1-w_2|.
  \]
  Since $W_n-W_n^{(\alpha)}$ has rank one, \cref{lem:rank-one-trace} gives
  $|s_n^{(\alpha)}(z)-s_n(z)|\le\pi/(p_n\eta)$ for every $\alpha$. Hence
  \[
    \frac1{N_n}\sum_{\alpha=1}^{N_n}
    \left|
    \frac{R_{n,\alpha}s_n^{(\alpha)}(z)}{1+c_nR_{n,\alpha}s_n^{(\alpha)}(z)}
    -\frac{R_{n,\alpha}s_n(z)}{1+c_nR_{n,\alpha}s_n(z)}
    \right|
  \]
  is at most $(|z|/\eta)^2\,\pi(p_n\eta)^{-1}N_n^{-1}\sum_\alpha R_{n,\alpha}$,
  which is $o_p(1)$ by \eqref{eq:abstract-radius-mean}. Substituting in
  \eqref{eq:abstract-first-replacement},
  \begin{equation}
    1+zs_n(z)
    =s_n(z)\frac1{N_n}\sum_{\alpha=1}^{N_n}
    \frac{R_{n,\alpha}}{1+c_nR_{n,\alpha}s_n(z)}+o_p(1).
    \label{eq:abstract-approx-fixed-point}
  \end{equation}

  \emph{Identification of the limit.} Take an arbitrary subsequence. Since
  $\widehat\nu_n\Rightarrow\nu$, \eqref{eq:abstract-radius-mean}, and
  \eqref{eq:abstract-approx-fixed-point} for each fixed $z$ all hold in
  probability, a diagonal argument over a fixed countable dense set $\mathcal
  Z\subset\C_+$ yields a further subsequence along which, almost surely, all of
  these statements hold simultaneously for every $z\in\mathcal Z$. Fix a sample
  point in this event. Along the subsequence $\sup_np_n^{-1}\Tr W_n<\infty$ by
  \eqref{eq:abstract-radius-mean}, so the measures $\ESD(W_n)$ are tight, and
  every further subsequence has a weakly convergent subsubsequence. Let $\mu$ be
  one of its limits; then $s_n(z)\to s_\mu(z)$ for every $z\in\C_+$ along that
  subsubsequence.

  Fix $z\in\mathcal Z$ and put $g_n=c_ns_n(z)$, $g=cs_\mu(z)$, so that $g_n\to
  g\in\C_+$ and $\Imc g_n\ge(\Imc g)/2$ for large $n$. For every $r\ge0$,
  \[
    \left|\frac{r}{1+g_nr}-\frac{r}{1+gr}\right|
    =\frac{|g_n-g|r^2}{|1+g_nr|\,|1+gr|}
    \le\frac{|g_n-g|}{(\Imc g_n)(\Imc g)},
  \]
  so $r\mapsto r/(1+c_nrs_n(z))$ converges to $r\mapsto r/(1+crs_\mu(z))$
  uniformly on $[0,\infty)$. The limit is bounded and continuous, and
  $\widehat\nu_n\Rightarrow\nu$, hence
  \[
    \frac1{N_n}\sum_{\alpha=1}^{N_n}
    \frac{R_{n,\alpha}}{1+c_nR_{n,\alpha}s_n(z)}
    \longrightarrow
    \int_0^\infty\frac{r}{1+crs_\mu(z)}\,\nu(\dd r).
  \]
  Passing to the limit in \eqref{eq:abstract-approx-fixed-point},
  \[
    \frac1{s_\mu(z)}+z
    =\int_0^\infty\frac{r}{1+crs_\mu(z)}\,\nu(\dd r),
    \qquad z\in\mathcal Z.
  \]
  By \cref{lem:uniqueness}, $s_\mu(z)$ is determined by this equation for each
  $z\in\mathcal Z$, so any two cluster measures have Stieltjes transforms that
  agree on $\mathcal Z$, hence on $\C_+$, and coincide. Thus along the original
  subsequence a further subsequence converges almost surely to a single
  deterministic measure $\mu_{c,\nu}$, which does not depend on the subsequence.
  By the subsequence characterization of convergence in probability,
  $\ESD(W_n)\Rightarrow\mu_{c,\nu}$ in probability. The identity
  \eqref{eq:abstract-fixed-point} holds on $\mathcal Z$ and extends to $\C_+$ by
  analyticity, and uniqueness of $\mu_{c,\nu}$ follows from
  \cref{lem:uniqueness} and the fact that a Stieltjes transform determines the
  measure. The almost-sure statement follows from \cref{lem:as-upgrade}.
\end{proof}

\section{Proof of the main theorem}
\label{sec:main-proof}

\begin{proof}[Proof of \cref{thm:main}]
  We check the hypotheses of \cref{thm:abstract-radial} for $x_n=x$, $N_n=N$,
  $p_n=p$. By \cref{lem:radius-lognormal}, the laws of $R_n$ converge to
  $\nu_{\lambda,v}$ in $W_2$, so the radii are uniformly integrable.
  \Cref{thm:radial-qf} gives \eqref{eq:abstract-radial}, and $p/N\to c$ by
  assumption. Hence $\ESD(S_n)$ converges in probability to the measure
  determined by \eqref{eq:main-fixed-point2}, and the solution is unique in
  $\C_+$.

  Under \eqref{eq:critical-scaling}, eventually $1\le d<n$, so $p=\binom nd\ge
  n$ and $\sum_ne^{-ap}<\infty$ for every $a>0$. The convergence is therefore
  almost sure.
\end{proof}

\section{The limit law}
\label{sec:consequences}

\subsection{Free compound-Poisson structure and moments}

Let $G(z)=-s(z)$ be the usual Cauchy transform. Rewriting
\eqref{eq:main-fixed-point2} gives
\begin{equation}
  z=\frac1{G(z)}+\mathcal R(G(z)),
  \qquad
  \mathcal R(w)=\E\frac{R}{1-cRw},
  \quad w\in\C_-.
  \label{eq:R-transform}
\end{equation}
\begin{proposition}
  \label{prop:free-Poisson-moments}
  The limiting measure $\mu_{c,\lambda,v}$ is the free compound-Poisson law with
  rate $1/c$ and jump distribution $\law(cR)$. For every $k\ge1$ it has the
  finite moment
  \begin{equation}
    M_k=\int t^k\,\mu_{c,\lambda,v}(\dd t)
    =\sum_{\pi\in\NC(k)}c^{k-|\pi|}
    \prod_{V\in\pi}\E R^{|V|}.
    \label{eq:NC-moments}
  \end{equation}
  Equivalently, its free cumulants are
  \begin{equation}
    \kappa_k=c^{k-1}\E R^k
    =c^{k-1}\exp\!\left(\frac{k(k-1)}2\lambda v\right).
    \label{eq:free-cumulants}
  \end{equation}
\end{proposition}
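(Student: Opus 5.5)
The plan is to identify $\mu_{c,\lambda,v}$ through its $R$-transform, as already set up in \eqref{eq:R-transform}, and to read off the free cumulants directly. I will proceed in four steps.

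\emph{Step 1: the $R$-transform.} With $G=-s$, \eqref{eq:main-fixed-point2} becomes $-1/G+z=\E[R/(1-cRG)]$. This is exactly $z=1/G(z)+\mathcal R(G(z))$, so $\mathcal R$ is the $R$-transform of $\mu_{c,\lambda,v}$. The identity holds on the region $\{G(z):z\in\C_+\}$, which contains a neighbourhood of $0$ in $\C_-$ approached as $z\to\infty$ nontangentially.

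\emph{Step 2: matching the compound-Poisson form.} The free compound-Poisson law with rate $\theta$ and jump law $\rho$ has $R$-transform $\theta\int t/(1-tw)\,\rho(\dd t)$. With $\theta=1/c$ and $\rho=\law(cR)$ this equals $c^{-1}\E[cR/(1-cRw)]=\mathcal R(w)$. The law is $\boxplus$-infinitely divisible, and it is characterized by its $R$-transform on such a domain, by the Bercovici--Voiculescu correspondence between Voiculescu transforms on Stolz angles and measures. Hence the two laws coincide.

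\emph{Step 3: moments.} Moments are the delicate point, because Carleman's criterion fails and the $R$-transform is not analytic at $0$ when $\lambda v>0$. I will argue asymptotically instead. Since $R$ has all moments, $\E R/(1-cRw)$ admits, as $w\to0$ nontangentially in $\C_-$, the asymptotic expansion $\sum_{k\ge1}c^{k-1}\E R^k\,w^{k-1}$ to every finite order. The remainder is controlled by $|1-cRw|^{-1}\le C$ on a Stolz angle, because $cR\ge0$ and $w$ stays away from the positive real axis.

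Next, substitute $w=G(z)$ into $z=1/G+\mathcal R(G)$ and invert order by order. The standard moment-cumulant argument (truncated Lagrange inversion) then shows that $G(z)$ has an asymptotic expansion $\sum_{k\le m}M_kz^{-k-1}+o(|z|^{-m-1})$ as $z=iy\to i\infty$, with $M_k$ given by the noncrossing formula \eqref{eq:NC-moments}. Here the cumulants are $\kappa_k=c^{k-1}\E R^k$, and the product $\prod_V c^{|V|-1}\E R^{|V|}=c^{k-|\pi|}\prod_V\E R^{|V|}$.

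A Hamburger-type lemma then finishes this step. If $G_\mu(iy)$ has such an expansion to order $2m$, then $\mu$ has finite moments up to order $2m$ equal to the coefficients. The standard proof uses monotone convergence on $-\Imc$ of $(iy)^{2m+1}\bigl(G(iy)-\sum_{k<2m}M_k(iy)^{-k-1}\bigr)$. This gives finiteness and the formula for every $M_k$.

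\emph{Step 4: the lognormal cumulants.} Finally $\E R^k=\exp(k\sqrt{\lambda v}\,Z-k\lambda v/2)$ in expectation, which equals $e^{k^2\lambda v/2-k\lambda v/2}=e^{\binom k2\lambda v}$. This gives \eqref{eq:free-cumulants}.

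I expect the main obstacle to be Step 3, the rigorous passage from the fixed-point equation to finite moments without analyticity. The delicate points are the uniform Stolz-angle remainder bounds and the inductive inversion of the asymptotic series.
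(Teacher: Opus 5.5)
Your proposal is correct. Its identification step matches the paper's, but it computes the moments by a genuinely different route.

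\textbf{Identification.} You read off the Voiculescu transform of $\mu_{c,\lambda,v}$ from the fixed-point equation and compare it with the compound-Poisson one. This is legitimate: $z=1/G+\mathcal R(G)$ on $\C_+$ forces $G$ to be injective there, so $F^{-1}(\zeta)=\zeta+\mathcal R(1/\zeta)$ on a truncated cone. The paper runs the same comparison in the other direction. It starts from $\phi_\pi$, derives the fixed-point equation for $G_\pi$ near infinity, and invokes \cref{lem:uniqueness}.

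\textbf{Moments.} Here the two arguments differ.
\begin{itemize}
\item The paper never expands the transform. It builds a weighted circular Gaussian model $W_{p,N}=N^{-1}\sum_\alpha R_\alpha z_\alpha z_\alpha^*$ and checks via \cref{thm:abstract-radial} that its ESD converges to $\mu_{c,\lambda,v}$. It then computes expected trace moments by a Wick/genus expansion in which only geodesic permutations, i.e.\ $\NC(k)$, survive. Finally it transfers these to the limit using a uniform bound on the $2k$th moments.
\item You stay on the analytic side. You use the Stolz-angle remainder bound for $\E[R/(1-cRw)]$, then truncated inversion of $z=1/G+\mathcal R(G)$, then a Hamburger--Nevanlinna-type lemma.
\end{itemize}

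The remainder bound is even easier than you suggest. Along $z=iy$ the real part of $G(iy)=-\int (t+iy)/(t^2+y^2)\,\mu(\dd t)$ is nonpositive, because the support lies in $[0,\infty)$. Hence $|1-cRG(iy)|\ge1$ without any Stolz-angle argument.

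What your route buys: it needs no auxiliary matrix model and no combinatorics of label collisions. It also gives a finite-order statement: finitely many moments of $R$ yield finitely many moments of $\mu$. This is the classical correspondence between moments and asymptotic expansions of Cauchy and $R$-transforms. The price is the bookkeeping in the asymptotic inversion, and you rely on the formal moment--cumulant identity to recognise the coefficients. The paper's model, by contrast, makes the noncrossing structure appear directly from the genus count.

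\textbf{A small slip.} In your lemma the monotone quantity is the \emph{real} part of $(iy)^{2m+1}\bigl(G(iy)-\sum_{k<2m}M_k(iy)^{-k-1}\bigr)$, not $-\Imc$ of it. That real part is $\int y^2t^{2m}/(y^2+t^2)\,\mu(\dd t)$. Monotone convergence gives $M_{2m}<\infty$. The imaginary part then forces the $(2m-1)$st coefficient to equal $M_{2m-1}$ by dominated convergence.
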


\begin{proof}
  Let $\pi$ be the free compound-Poisson law with rate $1/c$ and jump law
  $\law(cR)$. Its Voiculescu transform is
  \[
    \phi_\pi(\zeta)=\E\frac{R\zeta}{\zeta-cR}.
  \]
  This formula holds for unbounded jump measures as well
  \cite[Equation~(2.14)]{HasebeSakuma2017}. On a domain near infinity where
  $F_\pi=1/G_\pi$ has an analytic inverse, the identity
  $F_\pi^{-1}(\zeta)=\zeta+\phi_\pi(\zeta)$ gives \eqref{eq:R-transform}
  for $G_\pi$
  \cite{BercoviciVoiculescu1993}. By \cref{lem:uniqueness}, $-G_\pi=s$ on their
  common domain, and hence throughout $\C_+$ by analytic continuation. This
  identifies the law without a moment-determinacy assumption.

  To verify its moments, use a Gaussian model with independent radial weights.
  Let $z_1,\ldots,z_N\in\C^p$ have independent standard circular complex
  Gaussian entries, normalized by $\E|z_{\alpha j}|^2=1$, and let
  $R_1,\ldots,R_N$ be independent variables with law $\nu_{\lambda,v}$,
  independent of the Gaussian vectors. Put
  \[
    W_{p,N}=\frac1N\sum_{\alpha=1}^NR_\alpha z_\alpha z_\alpha^*.
  \]
  For $y=\sqrt R\,z$, its normalized squared radius is $\widehat
  R_p=R\|z\|^2/p$, and
  \[
    \E|\widehat R_p-R|^2=\frac{\E R^2}{p}.
  \]
  Thus $\widehat R_p\to R$ in $L^2$ and the radii are uniformly integrable. For
  every deterministic $A$, circular Gaussian contraction gives
  \begin{align*}
     & \E\left|
    \frac{y^*Ay}{p}
    -\widehat R_p\frac{\Tr A}{p}
    \right|^2   \\
     & \qquad=
    \frac{\E R^2}{p^2}
    \Tr\!\left[
      \left(A-\frac{\Tr A}{p}I\right)
      \left(A-\frac{\Tr A}{p}I\right)^*
      \right]
    \le\frac{\E R^2}{p}\|A\|_{\op}^2.
  \end{align*}
  By \cref{thm:abstract-radial}, $\ESD(W_{p,N})\Rightarrow\mu_{c,\lambda,v}$ in
  probability when $p/N\to c$.

  Fix $k\ge1$ and let $\gamma=(1\,2\,\cdots\,k)$. Expand $p^{-1}\Tr W_{p,N}^k$
  over the sample labels and Gaussian coordinate labels. A Wick contraction
  $\sigma\in S_k$ contributes $p^{\#(\gamma\sigma)}$. A partition $\tau$ of its
  cycles records coincidences among the sample labels. Different cycles may
  carry the same label, so these partitions must be retained in the finite
  formula. Assigning distinct labels to the blocks of $\tau$ gives
  $(N)_{|\tau|}$. Thus
  \begin{equation}
    \E\frac1p\Tr W_{p,N}^k
    =\frac1{pN^k}\sum_{\sigma\in S_k}p^{\#(\gamma\sigma)}
    \sum_{\tau\in\mathcal P(\operatorname{cyc}\sigma)}
    (N)_{|\tau|}
    \prod_{\beta\in\tau}
    \E R^{\sum_{C\in\beta}|C|}.
    \label{eq:exact-Wick}
  \end{equation}
  Here $\mathcal P(\operatorname{cyc}\sigma)$ is the set of partitions of the
  cycles of $\sigma$. The discrete partition contributes
  \[
    N^{\#\sigma}\prod_{C\in\operatorname{cyc}\sigma}\E R^{|C|}
    +O_{k,R}(N^{\#\sigma-1}),
  \]
  where the constant is finite because $R$ has moments of every order. The genus
  inequality
  \[
    \#(\gamma\sigma)+\#\sigma\le k+1
  \]
  shows that only equality cases survive. These geodesic permutations are in
  bijection with $\NC(k)$, their cycles being the blocks of the corresponding
  partition. Since $\#(\gamma\sigma)-1=k-\#\sigma$ in the equality case,
  \eqref{eq:exact-Wick} converges to the right side of \eqref{eq:NC-moments}.

  The same genus bound in \eqref{eq:exact-Wick} at order $2k$, including
  collision partitions, gives $\sup_{p,N}\E[p^{-1}\Tr W_{p,N}^{2k}]<\infty$
  along the sequence. Hence
  \[
    \E\int_{t>L}t^k\,\ESD(W_{p,N})(\dd t)
    \le L^{-k}\E\int t^{2k}\,\ESD(W_{p,N})(\dd t)
    \le C_kL^{-k}.
  \]
  Apply ESD convergence to the bounded continuous function $(t\wedge L)^k$, then
  let $L\to\infty$ using this tail bound. This proves \eqref{eq:NC-moments}; the
  moment--cumulant formula \cite{NicaSpeicher2006} gives
  \eqref{eq:free-cumulants}.
\end{proof}

In particular,
\begin{align*}
  M_1 & =1,                                     \\
  M_2 & =1+c\,e^{\lambda v},                    \\
  M_3 & =1+3c\,e^{\lambda v}+c^2e^{3\lambda v}.
\end{align*}
For comparison, the second moment of the MP law is $1+c$. These are moments of
the limit law; under \eqref{eq:base-assumptions} alone the spectral moments of
$S_n$ itself need not converge.

\begin{remark}
  The feature-side law is generally not $\nu_{\lambda,v}\boxtimes\mathrm{MP}_c$
  under the usual mean-one convention. Indeed, writing $m_j=\int
  r^j\,\nu_{\lambda,v}(\dd r)$, the second moment of the present law is
  $m_1^2+cm_2$, whereas the second moment of
  $\nu_{\lambda,v}\boxtimes\mathrm{MP}_c$ is $m_2+cm_1^2$.

  There is an exact companion formulation. Let $\underline\mu$ be the limiting
  ESD of the $N\times N$ companion covariance and let $\operatorname{Dil}_c$
  denote dilation by $c$. Then
  \[
    \underline\mu=(1-c)\delta_0+c\mu_{c,\lambda,v}
    =\operatorname{Dil}_c\!\left(
    \nu_{\lambda,v}\boxtimes\mathrm{MP}_{1/c}\right).
  \]
  For $c>1$, the zero atom of $\mu_{c,\lambda,v}$ cancels the negative
  coefficient of $\delta_0$.
\end{remark}

\subsection{Atoms}

Although \eqref{eq:main-fixed-point2} was derived on $\C_+$, the Stieltjes
transform is holomorphic on $\C\setminus[0,\infty)$ and the identity extends by
conjugation. At $z=-\sigma<0$, it also follows directly by continuity from
$\C_+$: $s(-\sigma)>0$, so all denominators are bounded away from zero.

The atom can also be read off from general results on freely infinitely
divisible laws, see \cite[Theorem~3.4]{HasebeSakuma2017}; the direct argument is
short.

\begin{proposition}
  \label{prop:atoms}
  The limiting law has no atoms in $(0,\infty)$, and
  \begin{equation}
    \mu_{c,\lambda,v}(\{0\})=\left(1-\frac1c\right)_+.
    \label{eq:zero-atom}
  \end{equation}
\end{proposition}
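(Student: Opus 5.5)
We work with the fixed-point equation \eqref{eq:main-fixed-point2} and the standard formula $\mu(\{a\})=\lim_{\eta\downarrow0}(-i\eta)\,s(a+i\eta)$, taken along the vertical ray. Write $s=s(z)$ with $z=a+i\eta$, and put $w=-i\eta s$. Then $|w|\le1$ and $w\to\mu(\{a\})$, where $\mu=\mu_{c,\lambda,v}$.

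\emph{Atom at zero.} Take $a=0$. Along the imaginary axis, $s(i\eta)$ has positive imaginary part, and $\eta\,s(i\eta)\to i\,\mu(\{0\})$.
\begin{itemize}
\item If $\mu(\{0\})=m>0$, then $s\approx im/\eta$ and $|s|\to\infty$. Multiply \eqref{eq:main-fixed-point2} by $s$:
\[
1+zs=\E\frac{Rs}{1+cRs}.
\]
\item On the left, $zs=i\eta s\to-m$.
\item On the right, $Rs/(1+cRs)\to1/c$ pointwise on $\{R>0\}$, and it vanishes on $\{R=0\}$. Since $R>0$ almost surely, we get $1/c$.
\item For domination, Lemma~\ref{lem:denominator} applied to the limit (or the bound $|1+cRs|\ge c R\,\Imc s\cdot(\text{angle})$) gives $|cRs/(1+cRs)|\le|z|\,|cRs|/\eta$. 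That is not uniform, so I will use a cleaner bound instead. For $s\in\C_+$ with $\Imc s\ge \kappa|s|$, one has $|1+cRs|\ge \kappa cR|s|$, so the integrand is bounded by $1/(c\kappa)$. Here $\kappa\to1$, because $s\approx im/\eta$ is nearly purely imaginary.
\item Dominated convergence then gives $1-m=1/c$, so $m=1-1/c$. This requires $c>1$, and shows there is no atom at $0$ when $c\le1$.
\end{itemize}
It remains to exclude the case $m=0$ when $c>1$. Alternatively, I would use the rank argument: $S_n$ has rank at most $N$, so every ESD has mass at least $1-N/p\to1-1/c$ at zero. By the portmanteau theorem on the closed set $\{0\}$, the weak limit satisfies $\mu(\{0\})\ge(1-1/c)_+$. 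Combined with the dichotomy above, this gives \eqref{eq:zero-atom}.

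\emph{No atoms at $a>0$.} Suppose $\mu(\{a\})=m>0$. Then $|s(a+i\eta)|\sim m/\eta\to\infty$ along the ray, with argument tending to $\pi/2$, by the same asymptotics. Multiplying \eqref{eq:main-fixed-point2} by $s$ as before, the left side $1+zs=1+(a+i\eta)s$ has modulus of order $am/\eta\to\infty$. The right side $\E[Rs/(1+cRs)]$ stays bounded by $1/(c\kappa)$ by the sector bound above. This is a contradiction, so there are no atoms in $(0,\infty)$.

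\emph{Main obstacle.} The delicate point is justifying that $s$ lies eventually in a sector $\Imc s\ge\kappa|s|$ with $\kappa>0$ whenever $\mu(\{a\})>0$. I will derive this from the decomposition
\[
s=\frac{m}{a-z}+\int_{t\ne a}\frac{\mu(\dd t)}{t-z},
\]
whose second term is $o(1/\eta)$ by dominated convergence applied to $\eta/|t-z|\le1$. That gives $s=im/\eta+o(1/\eta)$, which is exactly what both steps need.
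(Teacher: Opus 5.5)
Your proof is correct. For atoms in $(0,\infty)$ it is the paper's argument: an atom of mass $m$ at $x$ gives $s(x+i\eta)=im/\eta+o(1/\eta)$, so $|s|/\Imc s$ stays bounded. The right side of $1+zs=\E[Rs/(1+cRs)]$ is then bounded by $|s|/(c\Imc s)$, while the left side diverges. Your splitting of $s$ into the atom and the remainder makes explicit the standard asymptotics the paper uses.

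For the atom at zero your route is different.

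\emph{The paper's route.} It evaluates the equation at $z=-\sigma<0$, where $s(-\sigma)>0$ is real. There $cRs/(1+cRs)\in(0,1)$, so $1-\sigma s(-\sigma)\le 1/c$, which gives $\mu(\{0\})\ge 1-1/c$ directly. Dominated convergence then gives equality, with no sector estimate needed. The cost is extending the fixed-point equation from $\C_+$ to the negative axis, which the paper does by continuity just before the proposition.

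\emph{Your route.} You stay inside $\C_+$ on the imaginary axis. This needs your sector bound but no extension. You take the lower bound from $\operatorname{rank}S_n\le N$ and the portmanteau theorem. That step is valid here, since $\mu_{c,\lambda,v}$ is by definition the ESD limit. It is, however, external to the equation. The paper's argument applies to any measure solving the equation with $R>0$ almost surely, which is the generality it later claims for the atom argument.

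If you want to stay intrinsic on your own ray, note that $\mu$ lives on $[0,\infty)$, so $\operatorname{Re}s(i\eta)\ge 0$. Hence $|1+cRs|\ge|cRs|$, and therefore $|1+i\eta s(i\eta)|\le 1/c$. Letting $\eta\downarrow 0$ gives $|1-\mu(\{0\})|\le 1/c$, which is the lower bound $\mu(\{0\})\ge 1-1/c$ without the matrix model.
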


\begin{proof}
  For $\sigma>0$, put $u_\sigma=\sigma s(-\sigma)$. The fixed-point equation
  gives
  \begin{equation}
    1-u_\sigma=\frac1c\E\frac{cRs(-\sigma)}{1+cRs(-\sigma)}.
    \label{eq:zero-atom-eq}
  \end{equation}
  As $\sigma\downarrow0$, $u_\sigma\to a:=\mu(\{0\})$. If $c>1$, the integrand
  is at most one, so $a\ge1-1/c>0$ and consequently $s(-\sigma)\to\infty$. Since
  $R>0$ almost surely, dominated convergence in \eqref{eq:zero-atom-eq} gives
  $1-a=1/c$. If $c\le1$ and $a>0$, the same argument would give $a=1-1/c\le0$, a
  contradiction. This proves \eqref{eq:zero-atom}.

  Suppose that $\mu$ has an atom $a>0$ at $x>0$. Then, as $\eta\downarrow0$,
  $\eta s(x+i\eta)\to ia$, so $\Imc s\to\infty$ and $|s|/\Imc s$ remains
  bounded. On the other hand,
  \[
    \left|\E\frac{Rs}{1+cRs}\right|
    \le\frac{|s|}{c\Imc s}
  \]
  remains bounded, whereas $1+(x+i\eta)s(x+i\eta)$ diverges. This contradicts
  \eqref{eq:main-fixed-point1} and excludes positive atoms.
\end{proof}

\subsection{Support}

We follow the method of Silverstein and Choi~\cite{SilversteinChoi1995}.

\begin{lemma}
  \label{lem:negative-boundary}
  Let $R$ have a continuous density $f$ that is strictly positive on
  $(0,\infty)$. Fix $\sigma<0$ and put
  \[
    r_\sigma=-\frac1{c\sigma}>0.
  \]
  If $w=u+iy\to \sigma$ with $y>0$, then
  \begin{equation}
    \Imc\E\frac{R}{1+cRw}
    \longrightarrow
    -\frac{\pi r_\sigma f(r_\sigma)}{c|\sigma|}<0.
    \label{eq:negative-boundary}
  \end{equation}
\end{lemma}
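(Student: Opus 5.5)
The plan is to reduce the expectation to the Stieltjes transform of $\law(R)$ evaluated at a point approaching the real axis near $r_\sigma$. Then we use the Poisson-kernel (Stieltjes inversion) limit for a continuous density. Put
\[
  \zeta=\zeta(w)=-\frac1{cw},\qquad s_R(\zeta)=\E\frac1{R-\zeta}.
\]
Since $\Imc(-1/w)=y/|w|^2>0$, we have $\zeta\in\C_+$, and $\zeta\to-1/(c\sigma)=r_\sigma$ as $w\to\sigma$. Moreover $\Imc\zeta=y/(c|w|^2)\asymp y$. Because $1+cRw=cw(R-\zeta)$, we get the exact identity
\[
  \E\frac{R}{1+cRw}=\frac1{cw}\,\E\frac{R}{R-\zeta}
  =\frac1{cw}\bigl(1+\zeta s_R(\zeta)\bigr).
\]

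Taking imaginary parts gives
\[
  \Imc\E\frac{R}{1+cRw}
  =\Re\frac1{cw}\,\Imc\bigl(\zeta s_R(\zeta)\bigr)
  +\Imc\frac1{cw}\,\Re\bigl(1+\zeta s_R(\zeta)\bigr),
\]
and
\[
  \Imc(\zeta s_R)=\Re\zeta\,\Imc s_R+\Imc\zeta\,\Re s_R.
\]
We have $\Re(1/(cw))\to1/(c\sigma)$, $\Re\zeta\to r_\sigma$, and $\Imc(1/(cw))=-y/(c|w|^2)=O(y)$. The claim therefore follows from two facts, as $\zeta=a+ib\to r_\sigma$ with $b\downarrow0$:
\[
  \text{(i)}\quad \Imc s_R(\zeta)\to\pi f(r_\sigma),
  \qquad
  \text{(ii)}\quad b\,|\Re s_R(\zeta)|\to0.
\]
Given (i) and (ii), the limit is $\frac1{c\sigma}\,r_\sigma\,\pi f(r_\sigma)=-\pi r_\sigma f(r_\sigma)/(c|\sigma|)$. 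This is strictly negative because $f(r_\sigma)>0$.

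For (i), write
\[
  \Imc s_R(\zeta)=\int_0^\infty\frac{b}{(t-a)^2+b^2}\,f(t)\dd t,
\]
which is $\pi$ times the Poisson integral of $f$. Fix $\delta>0$ small, so that $f$ is continuous and bounded by some $M$ on $[r_\sigma-2\delta,r_\sigma+2\delta]\subset(0,\infty)$. Take $|a-r_\sigma|<\delta$. The part of the integral over $|t-r_\sigma|>2\delta$ is at most $b/\delta^2\to0$, since $f$ integrates to one. On the near part, replace $f(t)$ by $f(r_\sigma)$. The error is at most $\pi\sup_{|t-r_\sigma|\le2\delta}|f(t)-f(r_\sigma)|$, plus a term that vanishes as $b\to0$. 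The main term tends to $\pi f(r_\sigma)$, because the Poisson kernel has total mass $\pi$ and concentrates at $a\to r_\sigma$. This convergence holds uniformly in the direction of approach; no nontangential restriction is needed. Letting $\delta\downarrow0$ gives (i).

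For (ii), write
\[
  \Re s_R(\zeta)=\int\frac{t-a}{(t-a)^2+b^2}f(t)\dd t .
\]
The far region $|t-r_\sigma|>2\delta$ contributes $O(1/\delta)$. The near region contributes at most
\[
  M\int_{|u|\le3\delta}\frac{|u|}{u^2+b^2}\dd u=O\bigl(M\log(\delta/b)\bigr).
\]
Hence $b|\Re s_R(\zeta)|=O(b+b\log(1/b))\to0$. I expect the main obstacle to be exactly this real-part term. Unlike the standard inversion formula, here the prefactor $1/(cw)$ is genuinely complex, so $\Re s_R$ enters multiplied only by a factor of order $y$. One must check that $\Re s_R$ grows more slowly than $1/y$. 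The crude bound $|s_R|\le1/b$ does not suffice, and this is why local boundedness of the density, giving the logarithmic estimate, is used.
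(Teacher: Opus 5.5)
Your proof is correct, and it takes a somewhat different route from the paper's.

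\textbf{The paper's route.} The paper never passes through the Stieltjes transform of $\law(R)$. It takes imaginary parts first:
\[
  \Imc\E\frac{R}{1+cRw}=-cy\int_0^\infty\frac{r^2f(r)}{(1+cru)^2+(cry)^2}\,\dd r .
\]
This is a single integral with a nonpositive integrand. The paper localizes near $r_\sigma$, writes $1+cru=cu(r-r_u)$ with $r_u=-1/(cu)$, and substitutes $r=r_u+yt$, which turns the near part into a Cauchy kernel in $t$. The far part is killed by the explicit factor $y$. Because the imaginary part is taken before any rewriting, no real-part term ever appears.

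\textbf{Your route.} The substitution $\zeta=-1/(cw)$ and the identity $1+cRw=cw(R-\zeta)$ reduce everything to the textbook Poisson-kernel limit for $\Imc s_R$, applied to the law of $R$ itself. That is conceptually clean. The price is that the prefactor $1/(cw)$ is genuinely complex, which brings in the cross terms $\Imc\zeta\,\Re s_R$ and $y\,\Re s_R$; your fact (ii) is exactly what controls them. Your verification of (i), including uniformity in the direction of approach, is fine.

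\textbf{A simplification for (ii).} Your logarithmic bound is correct, but it is more than you need. Local boundedness of $f$ is not what makes (ii) work. Since
\[
  \left|\frac{b(t-a)}{(t-a)^2+b^2}\right|\le\frac12 ,
\]
and this kernel tends to $0$ for every fixed $t\ne r_\sigma$ as $(a,b)\to(r_\sigma,0)$, dominated convergence already gives $b\,\Re s_R(\zeta)\to0$. This holds for any law of $R$ with no atom at $r_\sigma$.
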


\begin{proof}
  For $w=u+iy$,
  \[
    \Imc\E\frac{R}{1+cRw}
    =
    -cy\int_0^\infty
    \frac{r^2f(r)}
    {(1+cru)^2+(cry)^2}\,\dd r.
  \]
  For $u<0$ near $\sigma$, set
  \[
    r_u=-\frac1{cu}.
  \]
  The only singular point of the real denominator approaches $r_\sigma$.
  Localize the integral to a fixed small neighborhood of $r_\sigma$ and use
  \[
    1+cru=cu(r-r_u).
  \]
  After the change of variables $r=r_u+yt$, the localized integral converges, by
  the standard Poisson-kernel approximate-identity calculation, to
  \[
    -\frac1c
    \int_{\R}
    \frac{r_\sigma^2f(r_\sigma)}
    {\sigma^2t^2+r_\sigma^2}\,\dd t
    =
    -\frac{\pi r_\sigma f(r_\sigma)}{c|\sigma|}.
  \]
  Outside the chosen neighborhood of $r_\sigma$, $|1+cru|$ is uniformly bounded
  away from zero relative to $1+r$ for $u$ sufficiently close to $\sigma$. The
  corresponding integrand is dominated by an integrable multiple of $f(r)$ and
  converges pointwise to zero. This proves \eqref{eq:negative-boundary}.
\end{proof}

\begin{theorem}[Support]
  \label{thm:support}
  Assume $\lambda v>0$. If $c<1$, there is a unique $\sigma_*>0$ satisfying
  \begin{equation}
    c\sigma_*^2\E\frac{R^2}{(1+cR\sigma_*)^2}=1.
    \label{eq:sstar}
  \end{equation}
  With
  \[
    x_-:=\E\frac{R}{1+cR\sigma_*}-\frac1{\sigma_*},
  \]
  one has $x_->0$ and
  \[
    \supp\mu_{c,\lambda,v}=[x_-,\infty).
  \]
  If $c\ge1$, then
  \[
    \supp\mu_{c,\lambda,v}=[0,\infty).
  \]
  In particular, the support is unbounded whenever $\lambda v>0$.
\end{theorem}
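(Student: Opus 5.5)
We follow Silverstein--Choi and work with the inverse function of the Stieltjes transform on the negative real axis. By \eqref{eq:main-fixed-point2}, on $\C_+$ we have $z=x(s)$, where
\[
  x(s)=-\frac1s+\E\frac{R}{1+cRs}.
\]
The transform $s$ is holomorphic off the support, real there, and increasing in $z$ on each real gap, since $s'(z)=\int(t-z)^{-2}\,\mu(\dd t)>0$. A point $x_0>0$ then lies outside the support iff there is a real $s_0$ with $x(s_0)=x_0$ and $x'(s_0)>0$, where $x$ must be analytic in a neighbourhood of $s_0$ in the sense of the boundary-value lemma.

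\emph{Which real $s$ are admissible.} At $x_0>0$ in a gap, $s(x_0)$ is real. If $s(x_0)$ were negative, the function $r\mapsto 1+crs$ would vanish at $r=-1/(cs)\in(0,\infty)$. For lognormal $R$ with $\lambda v>0$, the law has a continuous positive density $f$ on $(0,\infty)$. \cref{lem:negative-boundary} then shows that the boundary value of $\Imc\E R/(1+cRw)$ as $w\to s$ from $\C_+$ is strictly negative. Meanwhile $\Imc(1/w)\to0$ and $\Imc z\to0$, which contradicts \eqref{eq:main-fixed-point2}. Thus $s(x_0)\ge0$, and in fact $s(x_0)>0$, because $s$ is increasing on gaps and the measure lives on $[0,\infty)$. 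For $x_0>0$ beyond the support one would have $s(x_0)<0$ near $+\infty$, since $s(x)\sim-1/x$. This is excluded, so the support is unbounded.

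\emph{The case $c<1$.} On $s\in(0,\infty)$,
\[
  x'(s)=\frac1{s^2}\Bigl(1-cs^2\,\E\frac{R^2}{(1+cRs)^2}\Bigr).
\]
The function $g(s)=cs^2\E R^2/(1+cRs)^2=\E\bigl[(cRs)^2/(1+cRs)^2\bigr]/c$ is strictly increasing from $0$ to $1/c>1$. Hence there is a unique $\sigma_*$ with $g(\sigma_*)=1$, which is \eqref{eq:sstar}. It follows that $x$ increases on $(0,\sigma_*)$ and decreases afterwards. Moreover $x(s)\to-\infty$ as $s\downarrow0$, so $x$ maps $(0,\sigma_*)$ increasingly onto $(-\infty,x_-)$. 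The Silverstein--Choi criterion, which transfers verbatim via \eqref{eq:main-fixed-point2} and uniqueness (\cref{lem:uniqueness}), identifies the complement of the support with the real points $x(s)$ at which $x'(s)>0$. Together with the negative-$s$ exclusion above, this gives complement $=(-\infty,x_-)$ and support $=[x_-,\infty)$. The zero atom vanishes by \cref{prop:atoms} since $c<1$, which is consistent with this.

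For $x_->0$, note that $x(\sigma_*)$ is the maximum of $x$ on $(0,\infty)$. As $s\to\infty$,
\[
  x(s)\to\E R/(cRs)\cdot cs\cdots,
\]
more precisely $x(s)=-\tfrac1s+\tfrac1{cs}\E\bigl[1-\tfrac1{1+cRs}\bigr]=\tfrac{1/c-1}{s}-\tfrac{1}{cs}\E\tfrac1{1+cRs}$. This is positive for large $s$ because $\E(1+cRs)^{-1}\to0$ and $1/c>1$. Hence the maximum is positive.

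\emph{The case $c\ge1$.} Here $g(s)<1/c\le1$ for all $s>0$, so $x$ is strictly increasing on $(0,\infty)$. Its limit as $s\to\infty$ is $\lim\bigl[(1/c-1)/s-\ldots\bigr]\le0$. Thus no $x_0>0$ is of the form $x(s)$ with $s>0$, and negative $s$ is excluded. Therefore $(0,\infty)\subset\supp\mu$, and the support is $[0,\infty)$.

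The main obstacle I expect is justifying the Silverstein--Choi characterization for an unbounded jump law. This requires continuity of $s$ up to the real axis, analyticity of $x$ near positive real $s$ (which is fine, since $1+crs\ge1$), and the sign argument excluding negative $s$ through \cref{lem:negative-boundary}. I would handle these by working directly with the real-analytic function $x$ on $(0,\infty)$ and the inverse function theorem.
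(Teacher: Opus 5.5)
Your proposal is correct and is essentially the paper's Silverstein--Choi argument. It uses the sign of $x'(s)=(1-g(s))/s^2$ on $(0,\infty)$, then local inversion of $x(\cdot)$ together with \cref{lem:uniqueness} to clear $(-\infty,x_-)$; on a putative gap, \cref{lem:negative-boundary} excludes negative boundary values and $x(s)\le x_-$ (resp.\ $x(s)<0$ when $c\ge1$) excludes positive ones. Your small variations are valid substitutes for the paper's steps: you get $x_->0$ from $s\,x(s)\to1/c-1>0$, rule out $s(x_0)=0$ by monotonicity of $s$ on the gap, and deduce unboundedness from $s<0$ to the right of a bounded support.
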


\begin{proof}
  Let $f$ be the strictly positive continuous lognormal density and define, for
  $\sigma>0$,
  \[
    F(\sigma)=\E\frac{R}{1+cR\sigma}-\frac1\sigma,
    \qquad
    \xi_\sigma=\frac{cR\sigma}{1+cR\sigma},
    \qquad
    Q(\sigma)=\frac1c\E \xi_\sigma^2.
  \]
  Then
  \[
    F'(\sigma)=\frac{1-Q(\sigma)}{\sigma^2}.
  \]
  If $c<1$, $Q$ is continuous and strictly increasing from $0$ to $1/c>1$, which
  proves existence and uniqueness in \eqref{eq:sstar}. Moreover $F(0+)=-\infty$,
  $F(\infty)=0+$, and $F$ increases up to its unique maximum $x_-=F(\sigma_*)$
  and decreases thereafter. At $\sigma_*$, $\E \xi_{\sigma_*}^2=c$ and
  $0<\xi_{\sigma_*}<1$, so $\E \xi_{\sigma_*}>c$ and
  \[
    \sigma_*x_-=\frac{\E \xi_{\sigma_*}-c}{c}>0.
  \]

  Put
  \[
    \Psi(w)=\E\frac{R}{1+cRw}-\frac1w.
  \]
  This function is holomorphic on a complex neighborhood of the positive real
  axis, and its restriction there is $F$. For every $\sigma_0\in(0,\sigma_*)$,
  \[
    \Psi'(\sigma_0)=F'(\sigma_0)>0.
  \]
  Hence the holomorphic inverse theorem gives a local analytic inverse $\psi$ of
  $\Psi$ near $x_0=F(\sigma_0)$. Since
  \[
    \psi'(x_0)=\frac1{F'(\sigma_0)}>0,
  \]
  we have, for sufficiently small $\eta>0$,
  \[
    \Imc\psi(x_0+i\eta)>0.
  \]
  Thus this local inverse maps an upper-half-plane neighborhood into $\C_+$. By
  \cref{lem:uniqueness}, it must agree there with the Stieltjes transform $s$.

  Because $F$ maps $(0,\sigma_*)$ strictly increasingly from $-\infty$ onto
  $(-\infty,x_-)$, these local inverse branches patch together and show that $s$
  extends real-analytically across $(0,x_-)$. Stieltjes inversion therefore
  gives
  \[
    \mu((0,x_-))=0.
  \]

  It remains to exclude any gap above $x_-$. Suppose that an open interval
  $J\subset(x_-,\infty)$ has zero $\mu$-mass. Then $s$ extends holomorphically
  across $J$, and for every $x\in J$ the boundary value $s(x)$ is finite and
  real.

  We consider the three possible signs of $s(x)$.

  First, $s(x)>0$ is impossible. Indeed, the fixed-point equation extends to the
  gap and gives
  \[
    x=F(s(x)).
  \]
  Since $F(\sigma)\le x_-$ for every $\sigma>0$, this contradicts $x>x_-$.

  Second, $s(x)=0$ is impossible. Since
  \[
    s'(x)
    =
    \int\frac1{(t-x)^2}\,\mu(\dd t)>0,
  \]
  analyticity gives
  \[
    s(x+i\eta)
    =
    i\eta s'(x)+O(\eta^2).
  \]
  Thus $|s(x+i\eta)|/\Imc s(x+i\eta)$ remains bounded. Moreover,
  \[
    \left|
    \frac{Rs}{1+cRs}
    \right|
    \le
    \frac{|s|}{c\,\Imc s}.
  \]
  For every fixed $R$, the integrand tends to zero as $\eta\downarrow0$, so
  dominated convergence in
  \[
    1+zs=\E\frac{Rs}{1+cRs}
  \]
  would give $1=0$, a contradiction.

  Finally, suppose $s(x)=\sigma<0$. Since $s(x+i\eta)\to \sigma$ from $\C_+$,
  \cref{lem:negative-boundary} gives
  \[
    \Imc\E\frac{R}{1+cR\,s(x+i\eta)}
    \longrightarrow
    -\frac{\pi r_\sigma f(r_\sigma)}{c|\sigma|}<0,
    \qquad
    r_\sigma=-\frac1{c\sigma}.
  \]
  Since $\sigma\ne0$, we also have
  $\Imc(-1/s(x+i\eta))\to0$. Hence $\Imc\Psi(s(x+i\eta))$ has a strictly
  negative limit, contradicting $\Psi(s(x+i\eta))=x+i\eta$.

  Thus no open gap can lie above $x_-$. Since the complement of the support is
  open,
  \[
    \supp\mu_{c,\lambda,v}=[x_-,\infty).
  \]

  If $c\ge1$, then for every $\sigma>0$,
  \[
    \sigma F(\sigma)
    =
    \frac1c\E \xi_\sigma-1
    <
    \frac1c-1
    \le0.
  \]
  Hence a real boundary value $s(x)>0$ is impossible at every $x>0$. The zero
  and negative alternatives are excluded exactly as above. Therefore the
  limiting measure has no open gap in $(0,\infty)$.

  Since the covariance matrices are positive semidefinite,
  $\supp\mu_{c,\lambda,v}\subseteq[0,\infty)$. If $c>1$, the zero atom from
  \cref{prop:atoms} places $0$ in the support. If $c=1$ and $0$ were not in the
  support, some interval $(0,\delta)$ would be disjoint from the support,
  contradicting the preceding no-gap argument. Consequently
  \[
    \supp\mu_{c,\lambda,v}=[0,\infty).
  \]

  Unboundedness can also be seen from \eqref{eq:NC-moments}: the one-block
  partition gives
  \[
    M_k\ge c^{k-1}\E R^k
    =c^{k-1}e^{\lambda vk(k-1)/2},
  \]
  whose $k$th root diverges.
\end{proof}

The atom argument requires only $R>0$ almost surely. The support proof applies
whenever $R$ has a strictly positive continuous density on $(0,\infty)$, and
the moment formula holds for positive jump laws with moments of all orders.

When $\lambda v>0$, unbounded support and almost-sure ESD convergence imply
$\|S_n\|_{\op}\to\infty$ almost surely: for every fixed $M$, the limit assigns
positive mass to $(M,\infty)$, so eventually an eigenvalue lies there. No growth
rate follows from this argument.

\subsection{The unit-modulus case}

If $v=0$, then $X^2=1$ almost surely and $R\equiv1$. Equation
\eqref{eq:main-fixed-point2} becomes
\[
  \frac1s+z=\frac1{1+cs}.
\]
This is the MP equation, also obtained when $\lambda=0$.
For $v=0$ the identity $R_n\equiv1$ holds at every degree, and
\cref{prop:sharper-offdiag} recovers the larger MP range of
Yaskov~\cite{Yaskov2025}.

\begin{corollary}[The unit-modulus case]
  \label{cor:unit-modulus}
  Assume \eqref{eq:base-assumptions}, $v=0$, $p=\binom nd\to\infty$, $p/N\to
  c\in(0,\infty)$, and
  \begin{equation}
    \min(d,n-d)=o(n).
    \label{eq:unit-degree}
  \end{equation}
  Then $\ESD(S_n)$ converges almost surely to the ordinary MP law. Moreover,
  \eqref{eq:unit-degree} is necessary and sufficient for the uniform
  quadratic-form concentration
  \[
    \sup_{\|A\|_{\op}\le1}
    \E\left|\frac{x^{\mathsf T}Ax}{p}-\frac{\Tr A}{p}\right|^2\longrightarrow0.
  \]
\end{corollary}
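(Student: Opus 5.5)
We will prove \cref{cor:unit-modulus} in two parts. Sufficiency comes from \cref{prop:sharper-offdiag} and \cref{thm:abstract-radial}. Necessity comes from an explicit test matrix.

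\emph{Reduction to $d\le n/2$.} If $v=0$ then $X^2=1$ almost surely, so $X=\pm1$ and $\mu_3=\E X^3=\E X=0$. Moreover every $x_I$ is $\pm1$, so $R_n\equiv1$ and the diagonal part of $x^{\mathsf T}Ax$ is exactly $\Tr A$. Complementation $I\mapsto [n]\setminus I$ gives $x_{[n]\setminus I}=x_I\prod_{i=1}^nX_i$. The common sign $\prod_iX_i=\pm1$ cancels in quadratic forms, so the model at degree $d$ is a relabeling of the model at degree $n-d$. We may therefore assume $d\le n/2$, in which case \eqref{eq:unit-degree} says $d/n\to0$.

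\emph{Sufficiency.} Replace $A$ by its symmetrization, which preserves the quadratic form and the trace and does not increase the operator norm. Then
\[
  \frac{x^{\mathsf T}Ax}{p}-\frac{\Tr A}{p}=\frac{x^{\mathsf T}A^\circ x}{p}.
\]
\cref{prop:sharper-offdiag} with $v=0$ bounds its second moment by $\frac{2d(n-d)}{n(n-1)}\|A\|_{\op}^2$, which tends to zero. The hypotheses of \cref{thm:abstract-radial} then hold with $\nu=\delta_1$: the radii $R_n\equiv1$ are trivially uniformly integrable, and \eqref{eq:abstract-radial} is the bound just obtained. The fixed-point equation becomes the MP equation. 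Since $p\to\infty$, we have $\sum_ne^{-ap}<\infty$ for every $a>0$, so the convergence is almost sure. There is one caveat: the proposition asks for $d/n\to0$ along the whole sequence. If $\min(d,n-d)$ switches sides infinitely often, we apply the complement reduction termwise; the bound depends only on $\min(d,n-d)$.

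\emph{Necessity.} This is the main obstacle, since we need a single test matrix that detects $d/n\not\to0$. Take $A=\frac{1}{n}M_1M_1^{\mathsf T}$, so that $A_{IJ}=|I\cap J|/n$ and
\[
  x^{\mathsf T}Ax=\frac1n\sum_{i=1}^n\Bigl(X_i\sum_{I\ni i}x_{I\setminus\{i\}}\Bigr)^2.
\]
By \cref{lem:incidence-bound} and the Johnson spectrum, $\|M_1M_1^{\mathsf T}\|_{\op}=\binom{n-1}{d-1}\frac{(n-1)d}{n-1}$, which is of order $pd$ when $d\le n/2$. So the natural normalization is $A=M_1M_1^{\mathsf T}/(pd\cdot\mathrm{const})$ rather than the $1/n$ above. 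With this normalization, $x^{\mathsf T}Ax/p$ equals a constant times $\frac{1}{p^2d}\sum_i(\sum_{I\ni i}x_I)^2$.

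We then compute its variance via the chaos expansion in products of the $X_j$ over sets $D$ of even size. The $|D|=2$ component is essentially $\frac1n\sum_{j\ne k}X_jX_k$ times the factor $\binom{n-2}{d-1}\binom{\cdot}{\cdot}/p^2$. Its squared norm should be of order $(d/n)^2$ times a quantity that stays bounded below; equivalently, the $q=1$ contribution gives variance $\asymp\rho_1$-type quantities of size $d/n$ at least. Hence if $d/n\ge\eps$ along a subsequence, the variance stays bounded away from zero.

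If this eigenvector computation is messy, a fallback is to test directly with $A=$ the orthogonal projection onto $U_1\ominus U_0$. This space is spanned by the functions $I\mapsto\1_{\{i\in I\}}-d/n$, and the projection has $\Tr A=n-1$. Then $x^{\mathsf T}Ax$ is computable as $\|\text{projection of } x\|^2$, which contains $\frac{c}{p}\bigl(\sum_I x_I\sum_{i\in I}\cdots\bigr)^2$. The quantity $(d/n)\bigl(\sum_iX_i\bigr)^2$-type fluctuations give variance of order $(d/n)^2\cdot\Theta(1)$ after normalization by $p$. The step most likely to need care is this fluctuation lower bound.
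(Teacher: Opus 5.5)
Your reduction to degree $\min(d,n-d)$ and your sufficiency argument are correct and follow the paper's proof: the diagonal residual vanishes, \cref{prop:sharper-offdiag} applies with $v=0$, and then \cref{thm:abstract-radial} and \cref{lem:as-upgrade} finish.

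The necessity part has a genuine gap: neither of your test matrices can detect failure of concentration. Both $M_1M_1^{\mathsf T}/\|M_1M_1^{\mathsf T}\|_{\op}$ and the projection onto $U_1\ominus U_0$ have rank at most $n$. In the unit-modulus case, take any $A$ with $\|A\|_{\op}\le1$ and rank $r$. Then $|x^{\mathsf T}Ax|/p\le\|x\|^2/p=1$. Writing $A=\sum_{k\le r}s_ku_kv_k^*$ and using isotropy gives $\E|x^{\mathsf T}Ax|\le\sum_ks_k\le r$, and also $|\Tr A|\le r$. Hence
\[
  \E\left|\frac{x^{\mathsf T}Ax}{p}-\frac{\Tr A}{p}\right|^2
  \le\frac{2r}{p}+\frac{2r^2}{p^2}.
\]
This tends to zero for $r\le n$, because $p=\binom nd$ is exponentially large when $d/n\to\gamma\in(0,1)$. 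So no rank-$O(n)$ test can work.

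Concretely, take your normalized matrix $A=M_1M_1^{\mathsf T}/(d\binom{n-1}{d-1})$; note $\|M_1M_1^{\mathsf T}\|_{\op}=d\binom{n-1}{d-1}=d^2p/n$. The coefficient of each $X_jX_k$ in $x^{\mathsf T}Ax/p$ is $\frac{2(d-1)(n-d)}{pd(n-1)}$. So the $|D|=2$ component has $L^2$ norm $O(n/p)$, not order $d/n$; your estimate lost a factor $1/p$. The claimed $(d/n)^2$ fluctuation for the fallback projection is false for the same reason.

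The missing idea is a test matrix whose rank is proportional to $p$ and whose quadratic form collapses onto a single order-one chaos. The paper fixes $D=\{1,2\}$ and takes the symmetric partial permutation matrix with $A_{I,I\triangle D}=1$ when $|I\cap D|=1$, all other entries zero. Then $\|A\|_{\op}=1$ and $\Tr A=0$. Since $X_i^2=1$, every term $x_Ix_{I\triangle D}$ equals $X_1X_2$, so
\[
  \frac{x^{\mathsf T}Ax}{p}
  =\frac{2\binom{n-2}{d-1}}{\binom nd}X_1X_2
  =\frac{2d(n-d)}{n(n-1)}X_1X_2.
\]
Along a subsequence with $d/n\to\gamma\in[\eps,1-\eps]$, its $L^2$ norm tends to $2\gamma(1-\gamma)>0$. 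This matrix saturates the factor $\rho_1$ in \cref{prop:sharper-offdiag}, which is why it is the natural extremal test. Once you substitute it, the rest of your necessity outline (passing to such a subsequence) goes through.
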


\begin{proof}
  Since $v=0$, $X^2=1$ almost surely; together with $\E X=0$, this makes $X$
  Rademacher, so $\E X^3=0$ and $R_n\equiv1$. Set $k_n=\min(d,n-d)$. For every
  $n$ with $d>n/2$, complementing all coordinate sets identifies $xx^{\mathsf
  T}$, under a deterministic coordinate permutation, with the degree-$k_n$ outer
  product: the samplewise common factor $\prod_{i=1}^nX_i$ cancels. We may
  therefore work at degree $k_n=o(n)$ for the full sequence. The diagonal term
  is identically zero. Since $v=0$, the exponential factor in
  \eqref{eq:sharper-offdiag} equals one, and \cref{prop:sharper-offdiag} applies
  throughout $k_n/n\to0$. For all sufficiently large $n$, it gives
  \[
    \E\left|\frac{x^{\mathsf T}Ax}{p}-\frac{\Tr A}{p}\right|^2
    \le\frac{2k_n(n-k_n)}{n(n-1)}\|A\|_{\op}^2.
  \]
  Thus \eqref{eq:abstract-radial} holds with $\nu=\delta_1$ and
  \[
    \eps_n=\frac{2k_n(n-k_n)}{n(n-1)}.
  \]
  \Cref{thm:abstract-radial} gives the MP law in probability.
  Since $p=\binom{n}{k_n}\to\infty$, eventually $k_n\ge1$ and $p\ge n$.
  Thus $\sum_ne^{-ap}<\infty$ for every $a>0$, and \cref{lem:as-upgrade}
  gives almost-sure convergence.

  For sharpness, failure of \eqref{eq:unit-degree} gives a subsequence on which
  $\min(d,n-d)/n\ge\eps>0$; pass further so that $d/n\to\gamma\in[\eps,1-\eps]$.
  Fix $D=\{1,2\}$ and let $A$ be the symmetric partial-permutation matrix with
  $A_{I,I\triangle D}=1$ when $|I\cap D|=1$ and all other entries zero. Then
  $\|A\|_{\op}=1$, $\Tr A=0$, and
  \[
    \frac{x^{\mathsf T}Ax}{p}
    =\frac{2\binom{n-2}{d-1}}{\binom nd}X_1X_2
    =\frac{2d(n-d)}{n(n-1)}X_1X_2.
  \]
  Its $L^2$ norm tends to $2\gamma(1-\gamma)>0$. This proves necessity.
\end{proof}

\section{Two questions}
\label{sec:remarks}

For $v>0$ and $d^2/n\to\infty$,
\[
  \E R_n^2=\E(1+v)^K\ge(1+v)^{d^2/n}\longrightarrow\infty.
\]
The uniform second-moment bound is lost, but a weak limit with loss of moments
is not excluded. The prefactor $d/n$ may still compensate for the exponentials
in \eqref{eq:delta-kappa}. The present proof, however, establishes the $L^2$
comparison with $P_n$ only for bounded $d^2/n$. What are the radius and spectral
limits beyond that range, and what normalization, if any, is needed?

When $\E X^3=0$, \cref{prop:sharper-offdiag} has prefactor
$\rho_1=2d(n-d)/(n(n-1))$. The general bound replaces it by $4\rho_1$ and
increases the exponential factor. Both losses come from Cauchy--Schwarz over
$D\subseteq S_1$ in the chaos expansion. Does the sharper bound hold without the
third-moment assumption?

\section*{Acknowledgements}
Language-model tools were used as writing and editing assistants during the
preparation of the manuscript. The author is responsible for its content.

\end{document}